\def\qedbox{\hbox{$\rlap{$\sqcap$}\sqcup$}}
\def\qed{\nobreak\hfill\penalty250 \hbox{}\nobreak\hfill\qedbox}
 \newtheorem{thm}{Theorem}[section]
 \newtheorem{cor}[thm]{Corollary}
 \newtheorem{lem}[thm]{Lemma}
 \newtheorem{prop}[thm]{Proposition}
 \theoremstyle{definition}
 \theoremstyle{remark}
 \newtheorem{rem}[thm]{Remark}
 \theoremstyle{claim}
 \numberwithin{equation}{section}
\def\l{\langle}
\def\r{\rangle}
\numberwithin{equation}{section}
\newcounter{rom}
\renewcommand{\therom}{(\roman{rom})}
{\end{list}}
\title{New characterizations of the Clifford torus as a Lagrangian self-shrinker}
\begin{document}

\author[H. Li]{Haizhong Li}
\address{Department of Mathematical Sciences,
Tsinghua University,  Beijing 100084, P. R. China}
\email{hli@math.tsinghua.edu.cn}

\author[X. Wang]{Xianfeng Wang}
\address{School of Mathematical Sciences and LPMC,
Nankai University,
Tianjin 300071,  P. R. China}
\email{wangxianfeng@nankai.edu.cn}

\thanks {The first author was supported in part by NSFC Grant No. 11271214.
The second author was supported in part by NSFC (Grant Nos. 11201243 and 11571185) and ``Specialized Research Fund for the Doctoral Program of Higher Education, Grant No. 20120031120026''.}

\keywords {Lagrangian self-shrinker, rigidity, Clifford torus, mean curvature flow,  Gauss curvature.}
\subjclass[2010]{primary 53C44; secondary 53D12}


\begin{abstract}
In this paper, we obtain several new characterizations of the Clifford torus as a Lagrangian self-shrinker.  We first show that the Clifford torus $\mathbb{S}^1(1)\times\mathbb{S}^1(1)$ is the unique
compact orientable Lagrangian self-shrinker in $\mathbb{C}^2$ with $|A|^2\leq 2$, which gives an affirmative answer to Castro-Lerma's conjecture in \cite{Castro}.
We also prove that  the Clifford torus  is the unique compact  orientable embedded Lagrangian self-shrinker with nonnegative  or nonpositive Gauss curvature     in $\mathbb{C}^2$.
\end{abstract}

\maketitle

\section{Introduction}

Let $x:M^n\to \mathbb{R}^{n+p}$ be an $n$-dimensional submanifold in the $(n+p)$-dimensional Euclidean space. We call the immersed manifold $M^n$ a \textit{self-shrinker} if it satisfies the quasilinear elliptic system:
\begin{equation}\label{1.1}
\mathbf{H}=-x^{\bot},
\end{equation}
where $\mathbf{H}$ is the mean curvature vector and $\bot$ denotes the projection onto the normal bundle of $M^n$.

Self-shrinkers play an important role in the study of the mean curvature flow. Not only they correspond to self-shrinking solutions to the mean curvature flow,
 but also they describe all possible Type I blow ups at a given singularity of the mean curvature flow.
There are many results  about the classification of self-shrinkers.
In the curve case, Abresch and Langer \cite{AL} gave a complete classification of all solutions to \eqref{1.1}. These curves are called Abresch-Langer curves. In higher dimension and codimension one, Huisken (see \cite{Huis1990} and \cite{Huis1993}) proved
that $n$-dimensional smooth complete self-shrinkers  in $\mathbb{R}^{n+1}$ with $H\geq 0$, polynomial volume
growth, and $|A|$ bounded are $\Gamma\times\mathbb{R}^{n-1}$, or $\mathbb{S}^m(\sqrt{m})\times\mathbb{R}^{n-m}(0\leq m\leq n)$, where $\Gamma$ is an Abresch-Langer curve and $\mathbb{S}^m(\sqrt{m})$ is an $m$-dimensional sphere of radius $\sqrt{m}$. In \cite{CM}, Colding and Minicozzi  showed that Huisken's classification  holds without the assumption that $|A|$ is bounded.

In arbitrary codimensional case, Smoczyk \cite{Smo05} proved that (i) If $M^n$ is a compact self-shrinker in $\mathbb{R}^{n+p}$, then $M^n$ is a minimal submanifold of the
sphere $\mathbb{S}^{n+p-1}(\sqrt{n})$  if and only if $\mathbf{H}\neq 0$ and $\nabla^{\bot}\nu=0$, where $\nu=\mathbf{H}/|\mathbf{H}|$ is the principal normal. (ii)  Let $M^n$ be a complete non-compact self-shrinker in $\mathbb{R}^{n+p}$, if $\mathbf{H}\neq 0$, $\nabla^{\bot}\nu=0$, and $M^n$ has uniformly bounded geometry,  then $M^n$ is either $\Gamma\times\mathbb{R}^{n-1}$ or $N^m\times\mathbb{R}^{n-m}$, where $\Gamma$ is an Abresch-Langer curve and $N^m$ is an $m$-dimensional complete minimal submanifold  in $\mathbb{S}^{m+p-1}(\sqrt{m})$. In \cite{LiWei}, using the method of Colding and Minicozzi \cite{CM}, Li and Wei  showed that Smoczyk's result in complete non-compact case holds under a weaker condition.

We recall some  rigidity theorems for self-shrinkers. The first gap of the squared norm of the second fundamental form $|A|^2$ for self-shrinkers
was obtained by  Cao and Li \cite{CL} (which generalized codimension one case in \cite{LS}), they proved that
 if $M^n$ is a complete self-shrinker in $\mathbb{R}^{n+p}$, with polynomial volume growth and satisfying $|A|^2\leq 1$, then either $|A|^2=0$ and $M^n$ is  a hyperplane $\mathbb{R}^n$, or $|A|^2=1$ and $M^n$ is a round sphere $\mathbb{S}^n(\sqrt{n})$ or a cylinder $\mathbb{S}^m(\sqrt{m})\times\mathbb{R}^{n-m}(1\leq m\leq n-1)$. Cheng and Peng \cite{CP}  obtained some rigidity theorems on complete self-shrinkers without
assumption on polynomial volume growth. Ding and Xin \cite{DX} studied the second gap of $|A|^2$ for self-shrinkers in codimension one, they showed that if  $M^n$ is a complete self-shrinkers  in $\mathbb{R}^{n+1}$, with polynomial volume growth and satisfying $1\leq|A|^2\leq 1+0.022$, then $|A|^2=1$. Cheng and Wei \cite{CWei} improved the pinching constant 0.022 to 3/7 under the assumption that $|A|^2$ is constant.

In this paper, we are interested in rigidity results for compact Lagrangian self-shrinkers in $\mathbb{C}^2$.
An immersed manifold $M^n$ in  $\mathbb{C}^n$ is called a \textit{Lagrangian
submanifold} if the standard complex structure $J$ of $\mathbb{C}^n$
maps each tangent space of $M^n$ into its corresponding normal
space.
A Lagrangian submanifold $M^n$ in $\mathbb{C}^n$ is called a \textit{Lagrangian self-shrinker} if it satisfies \eqref{1.1}.
Recently, the study of Lagrangian self-shrinkers has drawn some attentions. For instance, many examples of Lagrangian self-shrinkers in $\mathbb{C}^n$ were constructed in \cite{Anciaux}, \cite{Castro2010} and \cite{LeeWang2010}, Hamiltonian stationary Lagrangian self-shrinkers in $\mathbb{C}^2$ were classified  in \cite{Castro2010}.
The canonical example of a compact Lagrangian self-shrinker in $\mathbb{C}^2$ is the Clifford torus $\mathbb{S}^1(1)\times\mathbb{S}^1(1)$, which is the standard example of monotone Lagrangian in  $\mathbb{C}^2$ (see \cite{N2010}).
In \cite{Castro}, Castro and Lerma obtained the following rigidity result for the Clifford torus.

\begin{thm}[see Theorem 1.2 in \cite{Castro}]\label{thm1.0}
Let $x:M^2\to \mathbb{C}^2$ be a compact orientable Lagrangian self-shrinker. If $|A|^2\leq 2$, then $|A|^2=2$ and  $M^2$ is a topological torus. If, in addition, the Gauss curvature $K$ of $M^2$ is nonnegative or nonpositive,  then $M^2$ is the Clifford torus $\mathbb{S}^1(1)\times\mathbb{S}^1(1)$.
\end{thm}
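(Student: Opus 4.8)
The plan is to run a Bochner/Simons argument adapted to the self-shrinker equation, exploiting the special feature of Lagrangian surfaces that raises the critical pinching value to $2$, and then to read off the topology and the rigid model from two integral identities. First I would encode the geometry in the totally symmetric cubic form: for a Lagrangian immersion in $\C^2$ the frame $\{Je_1,Je_2\}$ trivializes the normal bundle, and the tensor $h_{ijk}=\langle A(e_i,e_j),Je_k\rangle$ is fully symmetric in $i,j,k$ because $J$ is parallel. Writing the two shape operators as $S_1,S_2$, total symmetry ties them together, while $\mathbf H=-x^\perp$ becomes the pointwise relation $H_k=\sum_i h_{iik}=-\langle x,Je_k\rangle$. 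I would also record the Codazzi equations, which in a flat ambient make $\nabla h$ totally symmetric as well, and, differentiating the shrinker equation, the identity expressing $\nabla_i H_k$ through $h_{ijk}$ and the tangential position $x^{T}$; this is what converts the soliton condition into usable first-order information.

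Next I would compute the drift Laplacian $\mathcal L|A|^2$, where $\mathcal L=\Delta-\langle x,\nabla\rangle$ is self-adjoint with respect to the Gaussian weight $e^{-|x|^2/2}$ (note $\mathcal L x=-x$). The outcome should be a Simons-type identity of the schematic form $\tfrac12\mathcal L|A|^2=|\nabla A|^2+|A|^2-Q$, where $Q$ collects the quartic curvature terms together with the mean-curvature terms produced by the soliton equation. The decisive and hardest point is to show that the totally symmetric structure forces the critical value to be $2$ rather than the codimension-one value $1$: in dimension two the minimal-type quartic $\sum_{\alpha\beta}(\tr S_\alpha S_\beta)^2+\sum_{\alpha\beta}\|[S_\alpha,S_\beta]\|^2$ reduces to $|A|^4$ by an algebraic identity special to symmetric cubic forms, and one must verify that the soliton contributions to $Q$, estimated again via the same symmetry, upgrade this to a differential inequality $\tfrac12\mathcal L|A|^2\ge|A|^2\bigl(2-|A|^2\bigr)$, with the available gradient terms consumed in the estimate.

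Granting this inequality, I would integrate against $e^{-|x|^2/2}$ over the closed surface: the left-hand side vanishes by self-adjointness, while $|A|^2(2-|A|^2)\ge0$ once $|A|^2\le2$, so $|A|^2(2-|A|^2)\equiv0$; since a compact surface cannot be totally geodesic, this yields $|A|^2\equiv2$. With $|A|^2\equiv2$ the topology follows cleanly. The Gauss equation gives $2K=|\mathbf H|^2-|A|^2=|\mathbf H|^2-2$, while the soliton identity $\tfrac12\Delta|x|^2=2-|\mathbf H|^2$ gives $K=-\tfrac14\Delta|x|^2$; integrating over the closed $M^2$ and applying Gauss--Bonnet forces $\chi(M)=0$, so $M^2$ is a topological torus.

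If in addition $K$ does not change sign, then $\int_M K=0$ together with $K\ge0$ (or $K\le0$) forces $K\equiv0$; hence $|x|^2$ is harmonic on a closed surface and therefore constant, so $x=x^\perp$ and $\mathbf H=-x$, exhibiting $M^2$ as a flat minimal surface of a round sphere $\Sp^3(\sqrt2)$ (the radius being pinned by $0=2K=|\mathbf H|^2-2=r_0^2-2$). As the Clifford torus is the only flat minimal torus in $\Sp^3$, and the shrinker normalization fixes the scaling, $M^2=\Sp^1(1)\times\Sp^1(1)$. The hardest step is the sharp Simons inequality with critical value $2$: the totally symmetric structure is the mechanism that improves the constant, but the soliton (mean-curvature) terms in $Q$ must be handled delicately. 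Moreover, because that estimate does not return control of $\nabla A$, one cannot conclude that the second fundamental form is parallel, and this is precisely why the extra sign hypothesis on $K$ is needed to single out the Clifford torus rather than merely a topological torus.
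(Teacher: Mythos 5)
The crux of your proposal is the pointwise differential inequality $\tfrac12\mathcal{L}|A|^2\ge |A|^2(2-|A|^2)$, which you defer as ``the hardest step''; unfortunately this step is not just hard, it is false as a pointwise statement, and no amount of care with the soliton terms will rescue it. The exact Simons-type identity for a Lagrangian self-shrinker in $\mathbb{C}^2$ (the paper's Lemma 3.2) is
\begin{equation*}
\tfrac12\mathcal{L}|A|^2=|\nabla A|^2+|A|^2-\tfrac32|A|^4+2H^2|A|^2-\tfrac12H^4-\sum_{i,j,k,l}H^{k^*}H^{l^*}h_{ij}^{k^*}h_{ij}^{l^*}.
\end{equation*}
At any point where $\mathbf{H}=0$ this reduces to $|\nabla A|^2+|A|^2-\tfrac32|A|^4$, so your inequality would require the first-order lower bound $|\nabla A|^2\ge |A|^2+\tfrac12|A|^4$, for which there is no mechanism whatsoever. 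Note the coefficient $-\tfrac32$ (not $-1$): the totally symmetric cubic-form algebra does \emph{not} reduce the quartic to $|A|^4$, and the critical constant it produces is $\tfrac23$, not $2$. This is visible in the paper's own Proposition 4.2, where the configuration $\nabla A=0$, $\mathbf{H}=0$ forces $|A|^2\in\{0,\tfrac23\}$ --- a case that can only be excluded by global topological input, never by the algebra of the Simons identity. In short, the improvement of the pinching constant from the Cao--Li value to $2$ for Lagrangian surfaces is a topological phenomenon, not an algebraic one, and a Bochner/integration argument of the type you propose cannot see it.

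The paper's actual route (following Castro--Lerma) is entirely different and much softer: Smoczyk's theorem that no compact Lagrangian self-shrinker in $\mathbb{C}^n$ has the topology of a sphere gives $\mathrm{gen}(M^2)\ge 1$; then the Gauss equation $2K=H^2-|A|^2$, Gauss--Bonnet, and the integral identity $\int_M(2-H^2)\,dv=0$ (from $\tfrac12\Delta|x|^2=2-H^2$) combine to give $8\pi(1-\mathrm{gen}(M^2))=\int_M(2-|A|^2)\,dv\ge 0$, which forces $\mathrm{gen}(M^2)=1$ and $|A|^2\equiv2$ simultaneously. Your proposal never invokes the no-sphere theorem, and without it (or a substitute) the constant $2$ is out of reach. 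The remainder of your argument is salvageable and close in spirit to Castro--Lerma: once $|A|^2\equiv2$ is known, your Gauss--Bonnet step gives the torus, and under the sign hypothesis on $K$ your chain $K\equiv0\Rightarrow H^2\equiv2\Rightarrow |x|^2$ constant $\Rightarrow \mathbf{H}=-x$, exhibiting $M^2$ as a flat minimal torus in $\mathbb{S}^3(\sqrt2)$ and hence the Clifford torus, is a legitimate finish (the citation you need there is the classical uniqueness of the flat minimal torus in $\mathbb{S}^3$). So the proof becomes correct if you replace your first step by the Gauss--Bonnet/Smoczyk argument; as written, the first step is a genuine and fatal gap.
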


Castro and Lerma conjectured (see page 1519 in \cite{Castro}) that the condition ``the Gauss curvature $K$ of $M^2$ is nonnegative or nonpositive " is unnecessary in Theorem \ref{thm1.0}. Our following Theorem \ref{thm1.2} gives an affirmative answer to their conjecture. In fact, in Section 4, we prove
\begin{thm}\label{thm1.2}
Let $x:M^2\to \mathbb{C}^2$ be a compact orientable Lagrangian self-shrinker. If $|A|^2\leq 2$, then $|A|^2=2$ and $M^2$ is the Clifford torus $\mathbb{S}^1(1)\times\mathbb{S}^1(1)$.
\end{thm}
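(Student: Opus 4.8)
\textbf{Reduction.} By Theorem~\ref{thm1.0} the hypotheses already force $|A|^2\equiv 2$ (a constant) and $M^2$ to be a topological torus, and the \emph{same} theorem shows that the additional assumption ``$K\ge 0$ or $K\le 0$'' suffices to conclude. Hence the entire task is to prove that a compact orientable Lagrangian self-shrinker with $|A|^2\equiv 2$ cannot have Gauss curvature changing sign: once $K$ is shown to have a fixed sign, Gauss--Bonnet on the torus gives $\int_{M}K\,\mathrm{d}A=2\pi\chi(M^2)=0$, whence $K\equiv 0$, so in particular $K\ge 0$ and the curvature-sign case of Theorem~\ref{thm1.0} identifies $M^2$ with $\mathbb{S}^1(1)\times\mathbb{S}^1(1)$.

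\textbf{Algebraic identities for Lagrangian surfaces.} I would encode $A$ through the totally symmetric cubic $C_{ijk}=\langle A(e_i,e_j),Je_k\rangle$ (totally symmetric because the ambient space is flat). A direct computation gives the Gauss-type relation $K=\tfrac12\big(|\mathbf{H}|^2-|A|^2\big)$, so that $|A|^2\equiv 2$ becomes $|\mathbf{H}|^2=2+2K$; decomposing $C$ into its $U(1)$-weight-three and weight-one parts yields $|A|^2=\tfrac34|\mathbf{H}|^2+\tfrac14|C^{(3,0)}|^2$, and hence the pointwise pinching
\[
K=\tfrac16\big(|A|^2-|C^{(3,0)}|^2\big)\le \tfrac16|A|^2=\tfrac13,
\]
with equality exactly where $C^{(3,0)}=0$, i.e. where all of $A$ points in the $\mathbf{H}$-direction. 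Thus the sign of $K$ is governed by the single quantity $|C^{(3,0)}|^2$, and the upper bound $K\le\tfrac13$ is available for free.

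\textbf{The drift identity.} Let $\mathcal{L}=\Delta-\langle x,\nabla\,\cdot\,\rangle$ be the self-shrinker drift Laplacian, self-adjoint for $\mathrm{d}\mu=e^{-|x|^2/2}\,\mathrm{d}A$. As $|A|^2$ is constant, $\mathcal{L}|A|^2=0$, so the self-shrinker Simons identity $\tfrac12\mathcal{L}|A|^2=|\nabla A|^2+|A|^2-\mathcal{S}$ collapses to a pointwise relation; for a Lagrangian surface the Ricci equation gives the normal-curvature contribution $4K^2$, and the identity $\langle A(e_i,e_k),A(e_j,e_k)\rangle=\langle A(e_i,e_j),\mathbf{H}\rangle-K\delta_{ij}$ reduces the quartic $\mathcal{S}$ to a polynomial in $K$ and $C(\mathbf{H},\mathbf{H},\mathbf{H})$. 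Differentiating the shrinker equation $\mathbf{H}=-x^{\perp}$ gives $\nabla^{\perp}_{e_i}\mathbf{H}=A(e_i,x^{\top})$, which drives the Bochner identity for $|\mathbf{H}|^2$, equivalently for $K$. Combining the two identities cancels the indefinite invariant $C(\mathbf{H},\mathbf{H},\mathbf{H})$ and leaves the clean pointwise formula
\[
\mathcal{L}K=|\nabla^{\perp}\mathbf{H}|^2-|\nabla A|^2+2K(K-1),
\]
whose integral against $\mathrm{d}A$, using $\int_M K\,\mathrm{d}A=0$ and $\mathrm{div}_M(x^{\top})=-2K$, yields $\int_M\big(|\nabla A|^2-|\nabla^{\perp}\mathbf{H}|^2\big)\,\mathrm{d}A=4\int_M K^2\,\mathrm{d}A$; the weighted integral similarly gives $\int_M\big(|\nabla A|^2-|\nabla^{\perp}\mathbf{H}|^2\big)\,\mathrm{d}\mu=2\int_M K(K-1)\,\mathrm{d}\mu$.

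\textbf{The main obstacle and the endgame.} The heart of the matter is to extract the sign of $K$ from these identities, and the difficulty is the comparison of $|\nabla A|^2$ with its trace part $|\nabla^{\perp}\mathbf{H}|^2$. Because $\nabla^{\perp}\mathbf{H}=A(\cdot,x^{\top})$ is itself a contraction of $A$, I expect that the constancy $|A|^2=2$ used as an \emph{equality}, together with the weight decomposition of the totally symmetric tensor $\nabla A$, produces a refined Kato-type comparison that is sharp exactly at the Clifford configuration (so that $\nabla A$ is governed by its trace); fed into the displayed integral identities, with the free bound $K\le\tfrac13$, this should force $\int_M K^2=0$, i.e. $K\equiv 0$. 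An equivalent reformulation of the crux is the positivity of the weighted total curvature $\int_M K\,\mathrm{d}\mu$, since one always has $\int_M K\,\mathrm{d}\mu=-\tfrac12\int_M|x^{\top}|^2\,\mathrm{d}\mu\le 0$, so $\int_M K\,\mathrm{d}\mu\ge 0$ would give $x^{\top}\equiv 0$ (hence $M^2$ minimal in a round $\mathbb{S}^3$) directly. This comparison is the step I expect to be hardest and where the equality hypothesis is indispensable; once $K\equiv 0$ is reached, the reduction in the first paragraph completes the proof.
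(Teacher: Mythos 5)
Your reduction and your identities are fine as far as they go: Lemma~\ref{lem3.3} (the first half of Theorem~\ref{thm1.0}) does give $|A|^2\equiv 2$ and the torus topology, your formula $\mathcal{L}K=|\nabla^{\bot}\mathbf{H}|^2-|\nabla A|^2+2K(K-1)$ is exactly the paper's identity \eqref{DeltaK} specialized to $|A|^2=2$, and the integral consequences you list (including $\int_M K\,e^{-|x|^2/2}dv=-\tfrac12\int_M|x^{T}|^2e^{-|x|^2/2}dv\le 0$) are correct. But the proof has a genuine gap at precisely the point you flag as ``the step I expect to be hardest'': you never establish the comparison between $|\nabla A|^2$ and $|\nabla^{\bot}\mathbf{H}|^2$ that would force $\int_M K^2\,dv=0$, nor the alternative $\int_M K\,e^{-|x|^2/2}dv\ge 0$. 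This is not a routine Kato-type refinement one can expect to fall out of the constancy of $|A|^2$: since $\nabla^{\bot}\mathbf{H}$ is the trace of the totally symmetric tensor $\nabla A$, the standard pointwise inequalities go the \emph{wrong} way (they bound $|\nabla^{\bot}\mathbf{H}|^2$ by a multiple of $|\nabla A|^2$, not conversely), and your own identity $\int_M(|\nabla A|^2-|\nabla^{\bot}\mathbf{H}|^2)\,dv=4\int_M K^2\,dv$ shows that the inequality you need, $\int_M|\nabla A|^2\,dv\le\int_M|\nabla^{\bot}\mathbf{H}|^2\,dv$, holds \emph{only} for the Clifford torus among the objects under consideration. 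So establishing it is equivalent to the theorem itself; the proposal restates the problem rather than solving it.

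For contrast, the paper closes this gap by a pointwise argument at a minimum point $p_0$ of $|x|^2$ rather than by integral identities. There $\langle x,e_j\rangle=0$, so $\nabla^{\bot}\mathbf{H}=0$ by \eqref{3.2}; combining this with $(|A|^2)_{,k}=0$ and total symmetry of $h_{ij}^{k^*}$ and $h_{ij,l}^{k^*}$ yields a dichotomy: either $h_{11}^{1^*}=3h_{22}^{1^*}$, $h_{22}^{2^*}=3h_{11}^{2^*}$ at $p_0$ (which forces $|A|^2=\tfrac34 H^2(p_0)\le\tfrac32$, contradicting Lemma~\ref{lem3.3}), or $\nabla A=0$ at $p_0$. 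In the latter case $\mathcal{L}|A|^2=0$ plus a frame with $e_1\,/\!/\,J\mathbf{H}$ gives the pointwise inequality $|A|^2-\tfrac12|A|^4=(|A|^2-H^2)^2+H^2\sum_{i,j}(h_{ij}^{1^*}-\tfrac12 H\delta_{ij})^2\ge 0$ at $p_0$; equality analysis then gives $|x|^2(p_0)=2$, hence $|x|^2\equiv 2$ by Lemma~\ref{lem3.2}, hence $\mathbf{H}$ is parallel, $\nabla A\equiv 0$ globally, and the structure equations integrate explicitly to $x(s,t)=(e^{is},e^{it})$. Notably, this route never needs the curvature-sign case of Castro--Lerma's theorem, which your plan invokes as the final step. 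If you want to salvage your integral-identity approach, the missing ingredient is some mechanism (like the paper's minimum-point dichotomy) that converts the constancy of $|A|^2$ into control of $\nabla A$ itself, not just of its trace.
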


\begin{rem}\label{rem1.3}
For any $m,n\in\mathbb{N},~(m,n)=1,~m\leq n$,  Lee and Wang \cite{LeeWang2010} constructed the following example $T_{m,n}$ of Lagrangian self-shrinker by
$$\Psi_{m,n}:~\mathbb{R}^2\to\mathbb{C}^2,~(s,t)\longmapsto \sqrt{m+n}(\frac{1}{\sqrt{n}}\cos{s}~e^{i\sqrt{\frac{n}{m}}t},\frac{1}{\sqrt{m}}\sin{s}~e^{i\sqrt{\frac{m}{n}}t}),$$
with the squared norm of the second fundamental form satisfying $\frac{3m^2+n^2}{n(m+n)}\leq |A|^2\leq \frac{m^2+3n^2}{m(m+n)}$ (cf. \cite{Castro}).
$\forall \epsilon>0$, if $m$ and $n$ are integers satisfying that $m>\frac{3}{\epsilon}$ and $n=m+1$, then $(m,n)=1$ and $|A|^2\leq \frac{m^2+3n^2}{m(m+n)}< 2+\epsilon$, so there exist infinitely many examples $T_{m,n}$ satisfying $|A|^2\leq 2+\epsilon$.
 In other words, Lee-Wang's examples $T_{m,n}$ have an upper bound on $|A|^2$ which gets arbitrarily close to $2$.
This shows that the pinching constant $2$ is optimal in Theorem \ref{thm1.2}.
\end{rem}

In the last section, we prove the following classification theorem for compact orientable Lagrangian self-shrinker  in $\mathbb{C}^2$  with  nonnegative Gauss curvature.
\begin{thm}\label{thm1.3}
Let $x:M^2\to \mathbb{C}^2$ be a compact orientable Lagrangian self-shrinker. If the Gauss curvature $K$ of $M^2$ is nonnegative, then $K=0$ and $M^2$ is the Riemannian product of two closed Abresch-Langer curves.
\end{thm}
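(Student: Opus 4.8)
The plan is to first fix the intrinsic type of $M^2$ and then promote a pointwise splitting of its second fundamental form to a genuine Riemannian product. I fix a local orthonormal frame $e_1,e_2$ of $TM^2$; since $M^2$ is Lagrangian, $Je_1,Je_2$ is a normal frame, and I encode the second fundamental form by the totally symmetric cubic form $C_{ijk}=\langle h(e_i,e_j),Je_k\rangle$, for which the Gauss equation reads $K=C_{111}C_{122}+C_{112}C_{222}-C_{112}^2-C_{122}^2$. First I would show $M^2$ is not a sphere. The $1$-form $\alpha(X)=\langle Jx,X\rangle$ is closed because $d\alpha=2\,\omega|_{TM^2}=0$ (Lagrangian), and a computation using $\mathbf H=-x^{\bot}$ gives $\operatorname{div}_{M^2}(Jx)^{\top}=\langle x,(Jx)^{\top}\rangle$, so $\alpha$ is coclosed for the Gaussian weight $e^{-|x|^2/2}$. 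If $M^2\cong\mathbb S^2$ then $\alpha=dg$ is exact and $g$ is weighted-harmonic on a closed manifold, hence constant; then $\alpha\equiv0$, so $Jx$ is normal, $x=-J(Jx)$ is tangent, $x^{\bot}=0$ and $\mathbf H=0$, impossible for a closed submanifold of $\mathbb R^4$. Hence $b_1(M^2)\geq1$, so $\chi(M^2)\leq0$, and Gauss--Bonnet with $K\geq0$ forces $0\leq\int_{M^2}K=2\pi\chi(M^2)\leq0$; thus $K\equiv0$ and $M^2$ is a flat torus.

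Next I would exploit flatness to diagonalize $C$ pointwise. Form the complex quantities $\Gamma=(C_{111}-3C_{122})+i(C_{222}-3C_{112})$ and $\eta=(C_{111}+C_{122})+i(C_{112}+C_{222})$ from the trace-free and the mean-curvature parts of $C$; a direct computation gives $|\Gamma|^2-|\eta|^2=-8K$. As $K=0$ we get $|\Gamma|=|\eta|$, and since a rotation of the frame by $\theta$ sends $\Gamma\mapsto e^{3i\theta}\Gamma$ and $\eta\mapsto e^{-i\theta}\eta$, the equation $e^{4i\theta}\Gamma=\eta$ can be solved. In the resulting frame $C_{112}=C_{122}=0$, so $h(e_1,e_2)=0$, $h(e_1,e_1)=C_{111}Je_1$ and $h(e_2,e_2)=C_{222}Je_2$ (at points where $C$ vanishes this holds trivially). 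Thus at each point there is a distinguished frame in which the mixed second fundamental form vanishes.

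The decisive step is to prove that these distinguished directions are parallel. Working in the diagonal frame, now rotating by an angle $\theta$ against a fixed parallel frame (so its connection form is $d\theta$), I write $a=C_{111}$, $d=C_{222}$, $\theta_i=e_i(\theta)$, and combine the Codazzi equations with the self-shrinker identity $\nabla^{\bot}_{e_i}\mathbf H=h(e_i,x^{\top})$. Codazzi gives $e_2(a)=a\theta_1$, $e_1(d)=-d\theta_2$ and $a\theta_2=-d\theta_1$, while the normal part of the self-shrinker identity gives $e_1(d)=-a\theta_1$; eliminating the derivatives yields $(a^2+d^2)\nabla\theta=0$. Hence $\theta$ is locally constant off the set $\{h=0\}$, which has empty interior (an open totally geodesic piece would lie in an affine $2$-plane, impossible for a closed surface), and by real-analyticity of self-shrinkers the diagonal directions extend to a global parallel orthogonal splitting $TM^2=L_1\oplus L_2$ with $h(L_1,L_2)=0$.

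Then I would invoke Moore's splitting theorem for isometric immersions of Riemannian products: $x$ factors as $x_1\times x_2$, where $x_i$ maps a circle into the fixed plane $V_i=\mathrm{span}_{\mathbb R}(e_i,Je_i)$. Each $V_i$ is a complex line, so $\mathbb C^2=V_1\oplus V_2$ is a $J$-orthogonal decomposition and the factorization is compatible with the Lagrangian and self-shrinker structure; each $x_i$ is therefore a closed planar self-shrinker, i.e. an Abresch--Langer curve, and $M^2$ is their Riemannian product. I expect the third step --- upgrading the pointwise diagonalization to a parallel splitting --- to be the main obstacle, since this is precisely where the self-shrinker equation (and not merely $K=0$) must enter, and where the degenerate set $\{h=0\}$ must be controlled.
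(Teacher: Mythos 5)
Your strategy is essentially the paper's (Lemma \ref{lem5.1} + Propositions \ref{prop5.3} and \ref{prop5.5}): force $K\equiv 0$ topologically, diagonalize $h$ pointwise using flatness, use Codazzi plus the self-shrinker equation to make the diagonal frame parallel, then split the immersion. Your computations in the first three steps are correct, and at several points you take a genuinely different route. For the non-existence of a spherical example you give a self-contained weighted Hodge argument (the form $\alpha=\iota_x\omega|_{M}$ is closed, and your divergence identity $\mathrm{div}_{M}(Jx)^{\top}=\langle x,(Jx)^{\top}\rangle$ makes it coclosed for the Gaussian weight), where the paper cites Smoczyk \cite{Smo00}. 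For the diagonalization you use the spin-weighted quantities $\Gamma,\eta$ with $|\Gamma|^2-|\eta|^2=-8K$ (which checks out), where the paper maximizes $u\mapsto\langle h(u,u),Ju\rangle$ \`a la Montiel--Urbano \cite{MU} and then needs the implicit function theorem to get a smooth frame; your formula $e^{4i\theta}\Gamma=\eta$ yields smoothness of the frame for free away from $\{h=0\}$, which is a real simplification. Your system of equations is, after translating notation ($a=\Lambda_1$, $d=\Lambda_2$, $\theta_1=\alpha$, $\theta_2=-\beta$), exactly the paper's \eqref{codazzi1}, \eqref{codazzi3}, \eqref{shrinker}, and the conclusion $(a^2+d^2)\nabla\theta=0$ is the paper's $(\alpha^2+\beta^2)(\Lambda_1^2+\Lambda_2^2)=0$.

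The genuine gap is in your final step. Moore's theorem \cite{Moore} requires $M^2$ to be a \emph{global Riemannian product} $M_1\times M_2$ with $h(TM_1,TM_2)=0$; what you have produced is a parallel orthogonal splitting $TM^2=L_1\oplus L_2$ of a flat torus, and these are not the same thing. On a flat torus $\mathbb{R}^2/\Lambda$ a parallel line field has dense, non-closed leaves whenever its direction is irrational with respect to the lattice, and in that case $M^2$ is not a metric product along $L_1\oplus L_2$, so Moore does not apply; your phrase ``$x_i$ maps a circle'' silently assumes the leaves are closed. This must be ruled out geometrically, and the Lagrangian condition does it: along a leaf of $L_1$ the local product structure confines the image to a single affine plane $q+V_1$ with $V_1=\mathrm{span}_{\mathbb{R}}(E_1,JE_1)$ a complex line; if a leaf were dense in $M^2$, continuity would force $x(M^2)\subset q+V_1$, so every tangent plane would equal the $J$-invariant plane $V_1$, contradicting that tangent planes are Lagrangian. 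Only after this are all leaves closed and the splitting a product (up to a finite covering ambiguity, which, to be fair, the paper's own globalization also does not distinguish). Note that the paper avoids this issue entirely: its Proposition \ref{prop5.5} globalizes through the \emph{image} curves, using the unique-continuation property $ke^{-|x|^2/2}=c_{\Gamma}$ of Abresch--Langer curves (Lemma \ref{lem5.4}) to show that overlapping local product patches determine one fixed pair $\Gamma_1\times\Gamma_2$ and that the non-totally-geodesic set is clopen.

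Two smaller points in your third step. Your parenthetical reason that $\{h=0\}$ has empty interior (``an open totally geodesic piece would lie in an affine $2$-plane, impossible for a closed surface'') is wrong as stated: a closed surface can contain an open planar piece. What saves you is exactly the analyticity you invoke next: an open totally geodesic piece propagates by analytic continuation to all of $M^2$, which is then an immersed compact surface in a plane, impossible. Also, the assertion that analyticity extends the diagonal directions across $\{h=0\}$ needs an argument, because $M^2\setminus\{h=0\}$ may be disconnected and $\theta$ is a priori only locally constant there. One fix: $\eta\bar{\Gamma}$ and $|\Gamma|^2$ are real-analytic on $M^2$ and $\eta\bar{\Gamma}=e^{4i\theta_0}|\Gamma|^2$ on each component; analytic continuation forces the same constant $e^{4i\theta_0}$ on every component, so the splitting extends continuously (indeed constantly, in a global parallel frame) across $\{h=0\}$. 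With these repairs your argument goes through and is a legitimate alternative to the paper's.
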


If $x:M^2\to \mathbb{C}^2$ is embedded, using the result of Abresch-Langer which states that
the only closed embedded self-shrinker in $\mathbb{R}^2$ is the circle, as an immediate consequence of Theorem \ref{thm1.3},  we obtain the following new characterization of  the Clifford torus $\mathbb{S}^1(1)\times\mathbb{S}^1(1)$.

\begin{cor}\label{cor1.4}
 The Clifford torus $\mathbb{S}^1(1)\times\mathbb{S}^1(1)$  is the unique compact orientable embedded  Lagrangian self-shrinker in $\mathbb{C}^2$ with nonnegative Gauss curvature.
\end{cor}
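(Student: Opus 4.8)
The plan is to derive Corollary \ref{cor1.4} directly from Theorem \ref{thm1.3} together with the embeddedness hypothesis and the cited Abresch--Langer classification, so that no new analysis is required. First I would apply Theorem \ref{thm1.3}: since $M^2$ is a compact orientable Lagrangian self-shrinker with $K\geq 0$, it is the Riemannian product $M^2=\gamma_1\times\gamma_2$ of two closed Abresch--Langer curves $\gamma_1,\gamma_2\subset\mathbb{R}^2$, and moreover $K\equiv 0$. Because Abresch--Langer curves are by definition the closed self-shrinking curves in $\mathbb{R}^2$, each factor $\gamma_i$ is already a closed self-shrinker, so no further computation is needed to identify the factors as self-shrinking curves.

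Next I would exploit embeddedness. The immersion $x$ is the product map $(s,t)\mapsto(\gamma_1(s),\gamma_2(t))$ into $\mathbb{R}^2\times\mathbb{R}^2=\mathbb{C}^2$, and such a product map is injective if and only if each factor $\gamma_i$ is injective; hence if $M^2$ is embedded, then each $\gamma_i$ is an embedded closed self-shrinking curve in $\mathbb{R}^2$. By the Abresch--Langer result quoted in the text, the only closed embedded self-shrinker in $\mathbb{R}^2$ is the circle, namely $\mathbb{S}^1(1)$, since the self-shrinking circle has radius $\sqrt{1}=1$. Therefore $\gamma_1$ and $\gamma_2$ are both round circles of radius $1$, and $M^2=\mathbb{S}^1(1)\times\mathbb{S}^1(1)$ is the Clifford torus.

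Finally I would record the converse to justify the word ``unique'': the Clifford torus $\mathbb{S}^1(1)\times\mathbb{S}^1(1)$ is compact, orientable, embedded, Lagrangian, a self-shrinker, and flat, so its Gauss curvature satisfies $K\equiv 0\geq 0$; thus it genuinely lies in the class under consideration and is the only such surface. I do not expect any serious obstacle here, since the corollary is essentially a repackaging of Theorem \ref{thm1.3}; the only points requiring a line of care are the elementary observation that an embedded product forces embedded factors, and the correct normalization of the self-shrinking circle's radius. All of the analytic difficulty has already been absorbed into the proof of Theorem \ref{thm1.3}.
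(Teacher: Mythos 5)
Your proposal is correct and follows exactly the paper's own route: the authors likewise obtain Corollary \ref{cor1.4} as an immediate consequence of Theorem \ref{thm1.3} combined with the Abresch--Langer fact that the only closed embedded self-shrinker in $\mathbb{R}^2$ is the circle $\mathbb{S}^1(1)$. Your extra details (embedded product forces embedded factors, the radius normalization, and the converse check) merely make explicit what the paper leaves implicit in its one-sentence derivation.
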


\begin{rem}
In \cite{N2011}, Neves proposed the following question (see Question 7.4 in \cite{N2011}): Find a condition on a Lagrangian torus in $\mathbb{C}^2$, which implies
that Lagrangian mean curvature flow $(L_t)_{0<t<T}$ will become extinct at time $T$ and, after rescale, $L_t$ converges to the Clifford torus. Our new characterizations of the Clifford torus might be useful to this question.
\end{rem}

\begin{rem}
We note that in Theorem \ref{thm1.2},  one does not need to assume that the Lagrangian self-shrinker is embedded, but we need this for Corollary \ref{cor1.4}.
We also note that the conclusion is still true if one replaces the assumption ``nonnegative Gauss curvature" by ``nonpositive Gauss curvature" in Corollary \ref{cor1.4}, see Corollary \ref{cor5.10} for more details.
\end{rem}

The paper is organized as follows: in the next section, we recall some basic formulas for Lagrangian submanifolds of $\mathbb{C}^2$.
In Section 3, we give some identities and lemmas for Lagrangian self-shrinkers of $\mathbb{C}^2$. In Section 4, we prove Theorem \ref{thm1.2}, we also prove that the Clifford torus $\mathbb{S}^1(1)\times\mathbb{S}^1(1)$ is the unique compact orientable Lagrangian self-shrinker in  $\mathbb{C}^2$  with $|A|^2$ being constant, which is the key step of the proof of Theorem \ref{thm1.2}. In Section 5, we prove Theorem \ref{thm1.3}.
Throughout this paper, we always assume that $M$ is connected and has no boundary.

\section{Preliminaries}
In this section,  $M^2$ will always denote a $2$-dimensional
Lagrangian submanifold of $\mathbb{C}^2$. We denote the Levi-Civita connections on $M^2$,
$\mathbb{C}^2$ and the normal bundle by $\nabla$, $D$ and
$\nabla^{\bot}$, respectively. The formulas of Gauss and Weingarten
are given by
\begin{equation}\label{GW}
D_XY=\nabla_XY+h(X,Y), ~D_X\xi=-A_{\xi}X+\nabla_X^{\bot}\xi,
\end{equation}
where $h$ is the second fundamental form, A denotes the shape operator, $X$ and $Y$ are tangent vector fields and $\xi$ is a normal
vector field on $M^2$.

The Lagrangian condition implies that (cf. \cite{LV}, \cite{LW})
\begin{equation}\label{kahler}\nabla_X^{\bot}JY=J\nabla_XY, ~A_{JX}Y=-J h(X,Y)=A_{JY}X.\end{equation}
The formulas above  imply that $\l h(X,Y),JZ\r$ is totally symmetric, i.e.,
\begin{equation}\label{2.1}
\l h(X,Y),JZ\r= \l h(Y,Z),JX\r=\l h(Z,X),JY\r,
\end{equation}
where $\l,\r$ denotes the standard inner product in $\mathbb{C}^2$.

For a Lagrangian submanifold $M^2$ in $\mathbb{C}^2$, an orthonormal frame field
$$e_1,e_2,e_{1^*},e_{2^*}$$
is called an \textit{adapted Lagrangian frame field} if $e_1,,e_2$ are orthonormal tangent vector fields and $e_{1^*},e_{2^*}$
are normal vector fields given by
\begin{equation}\label{2.2}
e_{1^*}=Je_1,e_{2^*}=Je_2.
\end{equation}
The dual frame fields of $e_1,e_2$ are $\theta_1,\theta_2$, the Levi-Civita connection forms and normal connection forms are $\theta_{ij}$ and
$\theta_{i^*j^*}$, respectively.
Writing $h(e_i,e_j)=\sum\limits_kh_{ij}^{k^*}e_{k^*}$, \eqref{2.1} is equivalent to
\begin{equation}\label{2.3}
h_{ij}^{k^*}=h_{jk}^{i^*}=h_{ki}^{j^*},~1\leq i,j,k\leq 2.
\end{equation}

We have the following structure equations.
\begin{equation}\label{str1}
dx=\sum_i\theta_ie_i,
\end{equation}
\begin{equation}\label{str2}
de_i=\sum_j\theta_{ij}e_j+\sum_{j,k}h_{ij}^{k^*}\theta_je_{k^*},
\end{equation}
\begin{equation}\label{str3}
de_{k^*}=-\sum_{i,j}h_{ij}^{k^*}\theta_je_{i}+\sum_l\theta_{k^*l^*}e_{l^*}.
\end{equation}

If we denote the components of curvature tensors of $\nabla$ and $\nabla^{\bot}$ by $R_{ijkl}$ and $R_{i^*j^*kl}$, respectively, then the equations of
Gauss, Codazzi and Ricci are given by  (cf. \cite{LV}, \cite{LW})
\begin{equation}\label{2.4}
R_{milp}=\sum_j(h_{ml}^{j^*}h_{ip}^{j^*}-h_{mp}^{j^*}h_{il}^{j^*}),
\end{equation}
\begin{equation}\label{2.4-2}
R_{jk}=\sum_{p}H^{p^*}h_{jk}^{p^*}-\sum_{i,p}h_{ij}^{p^*}h_{ik}^{p^*},
\end{equation}
\begin{equation}\label{2.5}
h_{ij,l}^{k^*}=h_{il,j}^{k^*},~1\leq i,j,k,l\leq 2,
\end{equation}
\begin{equation}\label{2.6}
R_{i^*j^*kl}=\sum_m(h_{mk}^{i^*}h_{ml}^{j^*}-h_{ml}^{i^*}h_{mk}^{j^*}),
\end{equation}
\begin{equation}\label{Gauss}
R=H^2-|A|^2,
\end{equation}
where $R_{jk}$ and $R$ are the Ricci curvature and the scalar curvature of $M^2$, respectively, $|A|^2=\sum\limits_{i,j,k}(h_{ij}^{k^*})^2$ is the squared norm  of the second fundamental form, $\mathbf{H}=\sum\limits_{k}H^{k^*}e_{k^*}=\sum\limits_{i,k}h_{ii}^{k^*}e_{k^*}$ is the mean curvature vector field, $H=|\mathbf{H}|$ is the
mean curvature of $M^2$, and $h_{ij,l}^{k^*}$ is defined by
\begin{equation}\label{2.7}
\sum_lh_{ij,l}^{k^*}\theta_l=dh_{ij}^{k^*}+\sum_lh_{lj}^{k^*}\theta_{li}+\sum_lh_{il}^{k^*}\theta_{lj}+\sum_mh_{ij}^{m^*}\theta_{m^*k^*}.
\end{equation}
We can write \eqref{2.7} in the following equivalent form:
\begin{equation}\label{2.8}
\begin{aligned}
(\nabla_X h)(Y,Z)=\nabla^{\bot}_X h(Y,Z)-h(\nabla_X
Y,Z)-h(Y,\nabla_X Z),
\end{aligned}
\end{equation}
where $X$, $Y$  and $Z$ are tangent vector fields on $M^2$. We note that $(\nabla_{e_k}h)(e_i,e_j)=\sum\limits_lh_{ij,k}^{l^*}e_{l^*}$.

Combining \eqref{2.3} and \eqref{2.5}, we know that $h_{ij,l}^{k^*}$ is totally symmetric, i.e.,
\begin{equation}\label{2.9}
h_{ij,l}^{k^*}=h_{jl,k}^{i^*}=h_{lk,i}^{j^*}=h_{ki,j}^{l^*},~1\leq i,j,k,l\leq 2.
\end{equation}

We  have the following Ricci identities.
\begin{equation}\label{2.10}
h_{ij,lp}^{k^*}-h_{ij,pl}^{k^*}=\sum_mh_{mj}^{k^*}R_{milp}+\sum_mh_{im}^{k^*}R_{mjlp}+\sum_mh_{ij}^{m^*}R_{m^*k^*lp},
\end{equation}
where $h_{ij,lp}^{k^*}$ is defined by
\begin{equation}\label{2.11}
\sum_ph_{ij,lp}^{k^*}\theta_p=dh_{ij,l}^{k^*}+\sum_ph_{pj,l}^{k^*}\theta_{pi}+\sum_ph_{ip,l}^{k^*}\theta_{pj}+\sum_ph_{ij,p}^{k^*}\theta_{pl}+
\sum_ph_{ij,l}^{p^*}\theta_{p^*k^*}.
\end{equation}

Using \eqref{2.3}, \eqref{2.4} and \eqref{2.6}, we have
\begin{equation}\label{2.12}
R_{m^*i^*lp}=R_{milp}.
\end{equation}

We define the first and second covariant derivatives, and Laplacian of the mean curvature vector field $\mathbf{H}=\sum\limits_{k}H^{k^*}e_{k^*}$ in the normal
bundle $N(M^2)$ as follows.
\begin{equation}\label{2.17}
\sum_iH_{,i}^{k^*}\theta_i=dH^{k^*}+\sum_lH^{l^*}\theta_{l^*k^*},
\end{equation}
\begin{equation}\label{2.18}
\sum_jH_{,ij}^{k^*}\theta_j=dH_{,i}^{k^*}+\sum_jH_{,j}^{k^*}\theta_{ji}+\sum_{l}H_{,i}^{l^*}\theta_{l^*k^*}.
\end{equation}
\begin{equation}\label{2.19}
\Delta ^{\bot}H^{k^*}=\sum_{i}H_{,ii}^{k^*},~H^{k^*}=\sum_ih_{ii}^{k^*}.
\end{equation}

Let $f$ be a smooth function on $M^2$, we define the covariant derivatives $f_{,i},f_{,ij}$, and the Laplacian of $f$ as follows.
\begin{equation}\label{2.20}
df=\sum_if_{,i}\theta_i,~\sum_jf_{,ij}\theta_j=df_{,i}+\sum_jf_{,j}\theta_{ji},~\Delta f=\sum_i f_{,ii}.
\end{equation}

\section{Some  Identities and Lemmas}
In this section, we assume that $x:M^2\to \mathbb{C}^2$ is a compact orientable Lagrangian self-shrinker.
The self-shrinker equation \eqref{1.1} is equivalent to
\begin{equation}\label{3.1}
H^{k^*}=-\l x,e_{k^*}\r,~1\leq k\leq 2.
\end{equation}

\begin{lem}[cf. \cite{CL}]\label{lem3.0} Let $x:M^2\to \mathbb{C}^2$ be a Lagrangian self-shrinker, we have
\begin{equation}\label{3.2}
H_{,i}^{k^*}=\sum_jh_{ij}^{k^*}\l x,e_{j}\r,~1\leq i,k\leq 2,
\end{equation}
\begin{equation}\label{3.3}
H_{,ij}^{k^*}=\sum_mh_{im,j}^{k^*}\l x,e_{m}\r+h_{ij}^{k^*}-\sum_{m,p}H^{p^*}h_{im}^{k^*}h_{mj}^{p^*},~1\leq i,j,k\leq 2.
\end{equation}
\end{lem}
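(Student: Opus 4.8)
The plan is to obtain both identities by differentiating the self-shrinker equation in the form \eqref{3.1}, namely $H^{k^*}=-\l x,e_{k^*}\r$, and feeding the result into the definitions \eqref{2.17}--\eqref{2.18} of the covariant derivatives in the normal bundle, using the structure equations \eqref{str1}--\eqref{str3} throughout.

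To prove \eqref{3.2}, I would first compute $dH^{k^*}=-\l dx,e_{k^*}\r-\l x,de_{k^*}\r$. Since $dx=\sum_i\theta_i e_i$ is tangent while $e_{k^*}$ is normal, the first term vanishes by orthogonality. For the second term I substitute \eqref{str3}, then use $\l x,e_{l^*}\r=-H^{l^*}$ from \eqref{3.1} together with the antisymmetry $\theta_{k^*l^*}=-\theta_{l^*k^*}$ of the normal connection forms, obtaining $dH^{k^*}=\sum_{i,j}h_{ij}^{k^*}\l x,e_i\r\theta_j-\sum_l H^{l^*}\theta_{l^*k^*}$. Inserting this into \eqref{2.17}, the normal-connection term is cancelled exactly, and comparing coefficients of $\theta_i$ (using the symmetry $h_{ij}^{k^*}=h_{ji}^{k^*}$ from \eqref{2.3}) yields \eqref{3.2}.

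For \eqref{3.3} I would differentiate \eqref{3.2} once more. The key auxiliary computation is $d\l x,e_m\r=\theta_m+\sum_j\theta_{mj}\l x,e_j\r-\sum_{j,p}h_{mj}^{p^*}H^{p^*}\theta_j$, which follows from \eqref{str2}, \eqref{str1} and \eqref{3.1}. Applying the Leibniz rule to $H_{,i}^{k^*}=\sum_m h_{im}^{k^*}\l x,e_m\r$, I substitute this expression for $d\l x,e_m\r$ together with the defining formula \eqref{2.7} for $dh_{im}^{k^*}$ (which contributes the covariant derivative $h_{im,l}^{k^*}$ plus Levi-Civita and normal connection terms), and then feed everything into the definition \eqref{2.18}.

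The main work, and the only genuine obstacle, is the index-and-sign bookkeeping in this second differentiation: one must check that every Levi-Civita connection form $\theta_{li},\theta_{lm}$ and every normal connection form $\theta_{p^*k^*}$ cancels against the correction terms $\sum_j H_{,j}^{k^*}\theta_{ji}$ and $\sum_l H_{,i}^{l^*}\theta_{l^*k^*}$ present in \eqref{2.18}. After these cancellations the surviving contributions are precisely the covariant derivative $\sum_m h_{im,j}^{k^*}\l x,e_m\r$, the tangential term $h_{ij}^{k^*}$ arising from the $\theta_m$ in $d\l x,e_m\r$, and the quadratic term $-\sum_{m,p}H^{p^*}h_{im}^{k^*}h_{mj}^{p^*}$; comparing coefficients of $\theta_j$ then gives \eqref{3.3}. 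I expect no conceptual difficulty here, since the covariant derivatives in \eqref{2.17}--\eqref{2.18} are designed exactly to absorb the connection forms, so the verification is routine once the index tracking is handled with care.
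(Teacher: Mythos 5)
Your proposal is correct and follows essentially the same route as the paper: both proofs differentiate the self-shrinker equation \eqref{3.1} twice using the structure equations \eqref{str1}--\eqref{str3}, the only difference being that the paper first packages the computation into covariant derivatives of the vector-valued quantities $x$ and $e_{k^*}$ (its formulas \eqref{xij}--\eqref{ekij}), whereas you carry out the equivalent connection-form cancellations explicitly via \eqref{2.7}, \eqref{2.17} and \eqref{2.18}. I checked your claimed cancellations and surviving terms; they are exactly right.
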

\begin{proof}

From \eqref{2.20} and the structure equations \eqref{str1}-\eqref{str3}, we  obtain
\begin{equation}\label{xij}
x_{,i}=e_i,~x_{,ij}=e_{i,j}=\sum_kh_{ij}^{k^*}e_{k^*}.
\end{equation}
and
\begin{equation}\label{ekij}
e_{k^*,i}=-\sum_jh_{ij}^{k^*}e_j,~e_{k^*,ij}=-\sum_mh_{im,j}^{k^*}e_m-\sum_{m,p}h_{im}^{k^*}h_{mj}^{p^*}e_{p^*}.
\end{equation}

Taking covariant derivative of \eqref{3.1} with respect to $e_i$ by use of \eqref{xij} and \eqref{ekij}, we obtain \eqref{3.2}.
Taking covariant derivative of \eqref{3.2} with respect to $e_j$ by use of \eqref{xij} and \eqref{3.1}, we obtain \eqref{3.3}.
\end{proof}

Recall the following operator $\mathcal{L}$ which was introduced and studied firstly on self-shrinkers by Colding and Minicozzi (see (3.7) in \cite{CM}):
$
\mathcal{L}=\Delta-\l x, \nabla\cdot\r=e^{|x|^2/2}\text{div}{(e^{-|x|^2/2}\nabla\cdot)},
$
where $\Delta,\nabla$ and div denote the Laplacian, gradient and divergent operator on the self-shrinker, respectively. The operator $\mathcal{L}$ is self-adjoint in a weighted $L^2$ space.

\begin{lem}\label{lem3.1}
Let $x:M^2\to \mathbb{C}^2$ be a Lagrangian self-shrinker, we have
\begin{equation}\label{DeltaA}
\begin{aligned}
\frac{1}{2}\mathcal{L} |A|^2=|\nabla A|^2+|A|^2-\frac{3}{2}|A|^4+2H^2|A|^2-\frac{1}{2}H^4-\sum_{i,j,k,l}H^{k^*}H^{l^*}h_{ij}^{k^*}h_{ij}^{l^*}.
\end{aligned}
\end{equation}
\end{lem}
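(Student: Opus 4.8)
The plan is to follow the standard Simons-type computation, but executed with the operator $\mathcal{L}=\Delta-\langle x,\nabla\cdot\rangle$ rather than the bare Laplacian, so that the drift terms coming from the self-shrinker equation get absorbed cleanly. First I would write $\frac{1}{2}\Delta|A|^2 = \sum_{i,j,k,l}h_{ij}^{k^*}h_{ij,ll}^{k^*}+|\nabla A|^2$, where $|\nabla A|^2=\sum_{i,j,k,l}(h_{ij,l}^{k^*})^2$; this is just the product rule applied twice to $|A|^2=\sum(h_{ij}^{k^*})^2$. The whole content then lies in rewriting $\sum_{l}h_{ij,ll}^{k^*}$ (a rough Laplacian of $A$) in terms of lower-order curvature quantities.

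The key step is to commute derivatives using the Codazzi equation \eqref{2.9} together with the Ricci identity \eqref{2.10}. Since $h_{ij,l}^{k^*}$ is totally symmetric, I would start from $h_{ij,ll}^{k^*}=h_{ll,ij}^{k^*}+(\text{commutator terms})$. The commutator terms, by \eqref{2.10}, are curvatures contracted against $A$: schematically $\sum_m h_{mj}^{k^*}R_{milp}+\sum_m h_{im}^{k^*}R_{mjlp}+\sum_m h_{ij}^{m^*}R_{m^*k^*lp}$ with appropriate index identification. Now $h_{ll,ij}^{k^*}=H_{,ij}^{k^*}$ is exactly the second covariant derivative of the mean curvature, and here the self-shrinker structure enters decisively: Lemma \ref{lem3.0}, formula \eqref{3.3}, gives
\begin{equation}\notag
H_{,ij}^{k^*}=\sum_m h_{im,j}^{k^*}\langle x,e_m\rangle+h_{ij}^{k^*}-\sum_{m,p}H^{p^*}h_{im}^{k^*}h_{mj}^{p^*}.
\end{equation}
Contracting $\sum_i h_{ij}^{k^*}H_{,ij}^{k^*}$ (summed over all free indices) produces three recognizable pieces: the term $\sum h_{im,j}^{k^*}\langle x,e_m\rangle$ contracted against $h_{ij}^{k^*}$ reassembles into $\frac{1}{2}\langle x,\nabla|A|^2\rangle$, which is precisely the drift term that converts $\Delta$ into $\mathcal{L}$; the term $h_{ij}^{k^*}$ gives $+|A|^2$; and the cubic term $-\sum H^{p^*}h_{im}^{k^*}h_{mj}^{p^*}$ contracted with $h_{ij}^{k^*}$ yields $-\sum_{i,j,m,k,p}H^{p^*}h_{im}^{k^*}h_{mj}^{p^*}h_{ij}^{k^*}$, one of the quartic curvature terms.

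What remains is to evaluate the curvature-commutator terms explicitly in dimension two, and this is where I expect the bookkeeping to be heaviest. I would substitute the Gauss equation \eqref{2.4} and the Ricci equation \eqref{2.6} (using also \eqref{2.12}, $R_{m^*i^*lp}=R_{milp}$) to express every $R_{milp}$ and $R_{m^*k^*lp}$ as a quadratic in the $h$'s. The contractions then collapse, via total symmetry \eqref{2.3} and repeated use of $n=2$, into the polynomial invariants $|A|^4=\big(\sum(h_{ij}^{k^*})^2\big)^2$-type contractions, $H^2|A|^2$, $H^4=(\sum H^{k^*}H^{k^*})^2$, and the mixed term $\sum H^{k^*}H^{l^*}h_{ij}^{k^*}h_{ij}^{l^*}$. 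The arithmetic of matching the coefficients $-\frac{3}{2}$, $+2$, and $-\frac{1}{2}$ in front of $|A|^4$, $H^2|A|^2$, and $H^4$ is the main obstacle: it forces one to exploit the Lagrangian-specific symmetry \eqref{2.3}, which makes $h_{ij}^{k^*}$ totally symmetric in all three indices and so effectively reduces the independent components (in a local frame one may take $h$ to be essentially a symmetric tensor on $\mathbb{R}^2$), allowing the two-dimensional identities among curvature contractions to close up exactly. Assembling all pieces and moving the drift term to the left to form $\frac{1}{2}\mathcal{L}|A|^2$ then yields \eqref{DeltaA}.
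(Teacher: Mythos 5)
Your proposal takes essentially the same route as the paper's proof: expand $\tfrac12\Delta|A|^2$ by the product rule, commute derivatives via the total symmetry \eqref{2.9} and the Ricci identity \eqref{2.10} to reduce to $H^{k^*}_{,ij}$ plus curvature-contraction terms, insert the self-shrinker formula \eqref{3.3} so that the $\sum_m h^{k^*}_{im,j}\langle x,e_m\rangle$ piece reassembles into the drift term converting $\Delta$ into $\mathcal{L}$, and finally exploit the two-dimensional Lagrangian structure to collapse the quartic terms. The only difference is in the last bookkeeping step, which you flag as the main obstacle: rather than substituting \eqref{2.4} and \eqref{2.6} as raw quadratics in $h$, the paper notes that on a surface every curvature component is a multiple of the Gauss curvature $K$ (with \eqref{2.12} handling the normal curvature), and then eliminates $K$ via the Gauss equation $2K=H^2-|A|^2$, which makes the matching of the coefficients $-\tfrac32$, $+2$, $-\tfrac12$ nearly automatic.
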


\begin{proof}
By definition of $\Delta$ and using \eqref{2.4},\eqref{2.6},\eqref{2.9},\eqref{2.10}, \eqref{2.12} and \eqref{3.3}, we have
\begin{equation}\label{deltaA1}
\begin{aligned}
&\frac{1}{2}\Delta |A|^2=\sum_{i,j,k,p}(h_{ij,k}^{p^*})^2+\sum_{i,j,k,p}h_{ij}^{p^*}h_{ij,kk}^{p^*}\\
=&|\nabla A|^2+\sum_{i,j,k,p}h_{ij}^{p^*}h_{kk,ij}^{p^*}+\sum_{i,j,m,p}h_{ij}^{p^*}h_{mi}^{p^*}R_{mj}+\sum_{i,j,k,m,p}h_{ij}^{p^*}h_{km}^{p^*}R_{mijk}
+\sum_{i,j,k,m,p}h_{ij}^{p^*}h_{ki}^{m^*}R_{m^*p^*jk}\\
=&|\nabla A|^2+\sum_k\frac{1}{2}(|A|^2)_{,k}\l x,e_k\r+|A|^2-\sum_{i,j,k,m,p}H^{m^*}h_{ij}^{p^*}h_{ik}^{p^*}h_{kj}^{m^*}\\
&+\sum_{i,j,m,p}h_{ij}^{p^*}h_{mi}^{p^*}R_{mj}+\sum_{i,j,k,m,p}h_{ij}^{p^*}h_{km}^{p^*}R_{mijk}
+\sum_{i,j,k,m,p}h_{ij}^{p^*}h_{ki}^{m^*}R_{m^*p^*jk}.
\end{aligned}
\end{equation}
Since $M^2$ is a Lagrangian surface in $\mathbb{C}^2$, denote the Gauss curvature of $M^2$  by $K$, from \eqref{2.4} and \eqref{2.12}, we have
\begin{equation}\label{RK}
\begin{aligned}
&-\sum_{i,p}h_{ij}^{p^*}h_{ik}^{p^*}=K\delta_{jk}-\sum_{p}H^{p^*}h_{jk}^{p^*},\\
&R_{mijk}=K(\delta_{mj}\delta_{ik}-\delta_{mk}\delta_{ij}),~R_{m^*p^*jk}=K(\delta_{mj}\delta_{pk}-\delta_{mk}\delta_{pj}),
\end{aligned}
\end{equation}
 substituting \eqref{RK} into \eqref{deltaA1}, using Gauss equation \eqref{Gauss}, we obtain
\begin{equation}
\begin{aligned}
\frac{1}{2}\mathcal{L} |A|^2=|\nabla A|^2+|A|^2-\frac{3}{2}|A|^4+2H^2|A|^2-\frac{1}{2}H^4-\sum_{i,j,k,l}H^{k^*}H^{l^*}h_{ij}^{k^*}h_{ij}^{l^*}.
\end{aligned}
\end{equation}
\end{proof}

\begin{lem}[cf. \cite{CL}, \cite{CM}]\label{lem3.2}
Let $x:M^2\to \mathbb{C}^2$ be a compact orientable Lagrangian self-shrinker, we have
\begin{equation}\label{lx2}
\begin{aligned}
&0=\int_M^2\frac{1}{2}\Delta (|x|^2)dv=\int_M^2(2-H^2)dv,\\
&0=\int_M^2\frac{1}{2}\mathcal{L}(|x|^2)e^{-\frac{|x|^2}{2}}dv=\int_M^2(2-|x|^2)e^{-\frac{|x|^2}{2}}dv.
\end{aligned}
\end{equation}
\end{lem}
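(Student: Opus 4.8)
The plan is to reduce both integral identities to elementary pointwise computations of $\Delta(|x|^2)$ and $\mathcal{L}(|x|^2)$, followed by an application of the divergence theorem on the closed surface $M^2$.

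First I would compute the rough Laplacian of the position vector. From the structure equations, as already recorded in \eqref{xij}, one has $x_{,i}=e_i$ and $x_{,ij}=\sum_k h_{ij}^{k^*}e_{k^*}$, so that $\sum_i x_{,ii}=\sum_k H^{k^*}e_{k^*}=\mathbf{H}$. Since $|\nabla x|^2=\sum_i\langle e_i,e_i\rangle=2$, the standard identity for the Laplacian of a squared norm gives
\begin{equation*}
\tfrac{1}{2}\Delta(|x|^2)=|\nabla x|^2+\langle x,\mathbf{H}\rangle=2+\langle x,\mathbf{H}\rangle.
\end{equation*}
The self-shrinker equation \eqref{1.1} says $\mathbf{H}=-x^{\bot}$, hence $\langle x,\mathbf{H}\rangle=-|x^{\bot}|^2=-H^2$, and therefore $\tfrac{1}{2}\Delta(|x|^2)=2-H^2$. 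Because $M^2$ is compact without boundary, $\int_{M^2}\Delta(|x|^2)\,dv=0$ by the divergence theorem, which yields the first line of \eqref{lx2}.

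For the weighted identity I would first note that $\nabla(|x|^2)=2\sum_i\langle x,e_i\rangle e_i=2\,x^{\top}$, so that $\langle x,\nabla(|x|^2)\rangle=2|x^{\top}|^2$ (only the tangential part of $x$ enters, since $\nabla(|x|^2)$ is tangent). Combining this with the formula just obtained gives
\begin{equation*}
\tfrac{1}{2}\mathcal{L}(|x|^2)=\tfrac{1}{2}\Delta(|x|^2)-\tfrac{1}{2}\langle x,\nabla(|x|^2)\rangle=(2-H^2)-|x^{\top}|^2.
\end{equation*}
Using the orthogonal decomposition $|x|^2=|x^{\top}|^2+|x^{\bot}|^2$ together with $|x^{\bot}|^2=H^2$, this simplifies to $\tfrac{1}{2}\mathcal{L}(|x|^2)=2-|x|^2$. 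Finally, since $\mathcal{L}=e^{|x|^2/2}\,\mathrm{div}\bigl(e^{-|x|^2/2}\nabla\,\cdot\,\bigr)$, we have $\mathcal{L}(|x|^2)\,e^{-|x|^2/2}=\mathrm{div}\bigl(e^{-|x|^2/2}\nabla(|x|^2)\bigr)$, whose integral over the closed surface vanishes by the divergence theorem; this gives the second line of \eqref{lx2}.

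The computations are all routine, so there is no serious obstacle here. The only points requiring care are the cancellation of the Levi-Civita connection terms when differentiating $\langle x,e_i\rangle$ a second time (equivalently, verifying $\sum_i x_{,ii}=\mathbf{H}$ cleanly from \eqref{str2}), and the correct interpretation of the drift term $\langle x,\nabla\,\cdot\,\rangle$ as pairing the ambient position vector with the tangential gradient so that only $x^{\top}$ contributes. Both follow directly from the self-shrinker equation \eqref{3.1} and the Lagrangian structure equations of Section~2.
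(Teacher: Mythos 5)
Your proposal is correct and follows essentially the same route as the paper: compute $\Delta x=\mathbf{H}$ from the structure equations, use the self-shrinker equation to get $\tfrac{1}{2}\Delta(|x|^2)=2-H^2$ and $\tfrac{1}{2}\mathcal{L}(|x|^2)=2-|x|^2$, then integrate over the closed surface. Your final step (writing $\mathcal{L}(|x|^2)e^{-|x|^2/2}$ as a divergence) is just the explicit form of the paper's appeal to self-adjointness of $\mathcal{L}$ in the weighted $L^2$ space, so there is no substantive difference.
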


\begin{proof}
It follows from \eqref{xij} and \eqref{3.1} that
$$\frac{1}{2}\Delta (|x|^2)=2+\l x,\Delta x\r=2+\sum_kH^{k^*}\l x,e_{k^*}\r=2-H^2,~\text{which implies that}$$
$0=\int_M^2\frac{1}{2}\Delta (|x|^2)dv=\int_M^2(2-H^2)dv$ and $\frac{1}{2}\mathcal{L}(|x|^2)=\frac{1}{2}\Delta (|x|^2)-\sum\limits_i\l x,e_i\r^2=2-|x|^2$.
The last equation in \eqref{lx2} follows from the fact that the operator $\mathcal{L}$ is self-adjoint in a weighted $L^2$ space.
\end{proof}

\section{Proof of Theorem \ref{thm1.2}}
In this section, we prove Theorem \ref{thm1.2}. First, we recall the following lemma which is important in the proof of our key Proposition \ref{prop3.5}. It was proved in \cite{Castro} by Castro and Lerma by using Gauss-Bonnet theorem combined with the Gauss equation in a clever way.

\begin{lem}[see Theorem 1.2 in \cite{Castro}]\label{lem3.3}
Let $x:M^2\to \mathbb{C}^2$ be a compact orientable Lagrangian self-shrinker. If $|A|^2\leq 2$ , then $|A|^2=2$ and $M^2$ is a topological torus.
\end{lem}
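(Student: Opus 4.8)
The plan is to combine the topological constraint coming from Gauss--Bonnet with the pointwise algebraic and integral information that the self-shrinker equation forces on a Lagrangian surface, and to show these are compatible with $|A|^2 \le 2$ only when $|A|^2 \equiv 2$ and the surface is a torus. First I would diagonalize the symmetric tensor $(h_{ij}^{k^*})$: because $\l h(X,Y),JZ\r$ is totally symmetric by \eqref{2.1}, there are only a few independent components, and at each point one can choose an adapted Lagrangian frame so that the second fundamental form is normalized. Writing the Gauss curvature via \eqref{Gauss} as $K = \tfrac12(H^2 - |A|^2)$, the constraint $|A|^2 \le 2$ gives a pointwise lower bound on $K$ in terms of $H^2$.

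Next I would invoke Lemma \ref{lem3.2}, whose first identity reads $\int_{M^2}(2 - H^2)\,dv = 0$, so that the mean curvature satisfies $\int_{M^2} H^2\,dv = 2\,\mathrm{Area}(M^2)$. Integrating the Gauss equation and applying the Gauss--Bonnet theorem $\int_{M^2} K\,dv = 2\pi\chi(M^2)$, I would get
\begin{equation}\label{gbstep}
2\pi\chi(M^2) = \int_{M^2} K\,dv = \frac{1}{2}\int_{M^2}\bigl(H^2 - |A|^2\bigr)\,dv = \frac{1}{2}\int_{M^2}\bigl(2 - |A|^2\bigr)\,dv.
\end{equation}
Under the hypothesis $|A|^2 \le 2$ the right-hand side of \eqref{gbstep} is nonnegative, forcing $\chi(M^2) \ge 0$. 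Since $M^2$ is compact and orientable, $\chi(M^2) \in \{2, 0\}$ (sphere or torus). The crux is to rule out the sphere: I expect one can show that a Lagrangian self-shrinker in $\mathbb{C}^2$ cannot have the topology of a sphere, using that a Lagrangian immersion of $\mathbb{S}^2$ would have vanishing Maslov-type / first Chern obstruction, or more concretely by an independent integral identity (for instance one derived from integrating $H^2$ against a suitable test function, or from the structure of $\mathbf{H} = -x^\perp$). Once $\chi(M^2) = 0$ is established, \eqref{gbstep} forces $\int_{M^2}(2 - |A|^2)\,dv = 0$, and combined with $|A|^2 \le 2$ pointwise this yields $|A|^2 \equiv 2$ and $M^2$ a topological torus.

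The main obstacle, as I see it, is precisely the exclusion of the spherical case: the Gauss--Bonnet argument alone only delivers $\chi(M^2)\ge 0$, and showing $\chi(M^2)\ne 2$ requires genuinely Lagrangian input rather than the generic submanifold theory. The natural route is topological — a compact orientable Lagrangian surface in $\mathbb{C}^2$ has trivial normal Euler class matched to its tangent Euler class through the complex structure $J$ (since $e_{k^*} = Je_k$ identifies the normal bundle with the tangent bundle), and compatibility with the Lagrangian condition forces $\chi(M^2)=0$. I would make this precise by computing the normal curvature $R_{1^*2^*12}$ from \eqref{2.6}, noting via \eqref{2.12} that it equals the Gauss curvature $R_{1212}=K$, and then integrating: the normal Euler number must vanish for a Lagrangian surface in $\mathbb{C}^2$, so $\int_{M^2} K\,dv = 0$, i.e. $\chi(M^2)=0$ directly. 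With the torus topology in hand, the pinching conclusion $|A|^2\equiv 2$ is immediate from \eqref{gbstep}.
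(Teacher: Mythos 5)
Your first half is exactly the paper's argument: Gauss equation plus Gauss--Bonnet plus the identity $\int_{M^2}(2-H^2)\,dv=0$ from Lemma \ref{lem3.2} give $2\pi\chi(M^2)=\tfrac12\int_{M^2}(2-|A|^2)\,dv\geq 0$, so only the sphere and the torus survive, and once the sphere is excluded the equality case forces $|A|^2\equiv 2$. The genuine gap is in how you exclude the sphere. Your claim that ``the normal Euler number must vanish for a Lagrangian surface in $\mathbb{C}^2$'' is false for \emph{immersed} surfaces, and the lemma is stated for immersions (the paper stresses in a remark that embeddedness is not assumed in Theorem \ref{thm1.2}). In fact the identity \eqref{2.12} that you invoke, $R_{1^*2^*12}=R_{1212}=K$, proves the opposite of what you want: it says the normal Euler number of a Lagrangian surface equals $\pm\chi(M^2)$, so integrating it is a tautology rather than a vanishing theorem. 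Vanishing of the normal Euler number for closed orientable surfaces in $\mathbb{R}^4$ is Whitney's theorem, which requires embeddedness and has nothing to do with the Lagrangian condition; the Whitney sphere is a Lagrangian \emph{immersion} of $\mathbb{S}^2$ in $\mathbb{C}^2$ with normal Euler number $-2$, so no argument of this type, and likewise no ``Maslov-type/first Chern obstruction,'' can rule out immersed Lagrangian spheres --- they exist in abundance.

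What is genuinely needed, and what the paper quotes at precisely this point, is Smoczyk's theorem that there exist no Lagrangian \emph{self-shrinkers} in $\mathbb{C}^n$ with the topology of a sphere (\cite{Smo00}, Theorem 2.3.5; see also Theorem 2.1 in \cite{Castro}). This is an analytic statement about self-shrinkers, not a topological statement about Lagrangian surfaces: one uses that the mean curvature $1$-form $\omega(\mathbf{H},\cdot)$ of a Lagrangian in $\mathbb{C}^2$ is closed, that on a genus-zero surface it is exact so the Lagrangian angle is globally defined, and that the self-shrinker equation $\mathbf{H}=-x^{\bot}$ then leads to a contradiction via the maximum principle. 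If you insert this cited result (or reprove it) in place of your normal-Euler-class argument, the rest of your proof closes exactly as in the paper.
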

\begin{proof}
Denote the Gauss curvature of $M^2$ by $K$. From Gauss equation $R=2K=H^2-|A|^2$ and Gauss-Bonnet theorem,  we have
\begin{equation}8\pi(1-\text{gen}(M^2))=2\int_M^2K dv=\int_M^2(H^2-|A|^2)dv=\int_M^2(2-|A|^2)dv,\end{equation}
where $\text{gen}(M^2)$ stands for the genus of $M^2$ and the last equality is due to Lemma \ref{lem3.2}.
It is well known that there exist no Lagrangian self-shrinkers in $\mathbb{C}^n$ with the topology of sphere, which
was proved by Smoczyk (see \cite{Smo00}, Theorem 2.3.5, see also Theorem 2.1 in \cite{Castro} for a detailed proof).  Hence, if  $|A|^2\leq 2$ , then $|A|^2=2$ and $M^2$ is a topological  torus.
\end{proof}

We are now ready to prove the following key proposition:
\begin{prop}\label{prop3.5}
Let $x:M^2\to \mathbb{C}^2$ be a compact orientable Lagrangian self-shrinker. If the squared norm of the second fundamental form $|A|^2$ is constant, then  $|A|^2=2$ and
$M^2$ is the Clifford torus $\mathbb{S}^1(1)\times\mathbb{S}^1(1)$.
\end{prop}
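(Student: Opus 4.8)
The plan is to use the constancy of $|A|^2$ to turn Lemma~\ref{lem3.1} into a pointwise identity, extract the lower bound $|A|^2\ge 2$ from Gauss--Bonnet, and then force equality and rigidity with a companion identity for $H^2$ and a maximum‑principle argument. The step I expect to be genuinely hard is the matching upper bound.

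First, since $|A|^2$ is constant, $\mathcal{L}|A|^2=0$, so \eqref{DeltaA} becomes the \emph{pointwise} equation
\[
|\nabla A|^2 = \tfrac32|A|^4 - |A|^2 - 2H^2|A|^2 + \tfrac12 H^4 + \sum_{i,j,k,l}H^{k^*}H^{l^*}h_{ij}^{k^*}h_{ij}^{l^*}.
\]
To organize the quartic terms I would introduce the symmetric matrix $p_{ij}=\langle h(e_i,e_j),\mathbf{H}\rangle$, which has trace $H^2$ and satisfies $\sum_{i,j,k,l}H^{k^*}H^{l^*}h_{ij}^{k^*}h_{ij}^{l^*}=|p|^2\ge\tfrac12(\operatorname{tr}p)^2=\tfrac12 H^4$. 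Setting $R:=|p|^2-\tfrac12 H^4=\tfrac12(p_{11}-p_{22})^2+2p_{12}^2\ge0$ and using the Gauss equation $2K=H^2-|A|^2$ from \eqref{Gauss}, the identity rearranges into the clean form
\[
|\nabla A|^2 = 4K^2 + \tfrac12|A|^2\bigl(|A|^2-2\bigr) + R .
\]

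For the lower bound I would integrate the Gauss equation. By Lemma~\ref{lem3.2} one has $\int_M H^2\,dv=2\operatorname{Vol}(M)$, so Gauss--Bonnet gives $(2-|A|^2)\operatorname{Vol}(M)=2\int_M K\,dv=4\pi\chi(M)$. Since Smoczyk's theorem (invoked in the proof of Lemma~\ref{lem3.3}) rules out Lagrangian self‑shrinking spheres, $M$ has positive genus, so $\chi(M)\le0$ and hence $|A|^2\ge2$, with equality forcing $\chi(M)=0$, i.e.\ a torus. The main obstacle is the reverse inequality $|A|^2\le2$ (equivalently, excluding genus $\ge 2$). Here I would bring in the companion Simons‑type identity obtained by tracing \eqref{3.3}, namely $\tfrac12\mathcal{L}H^2=|\nabla^{\bot}\mathbf{H}|^2+H^2-|p|^2$. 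Integrating this and the displayed identity against $dv$, again using $\int_M H^2\,dv=2\operatorname{Vol}(M)$, yields $\int_M|\nabla^{\bot}\mathbf{H}|^2\,dv=\int_M R\,dv$ together with $\int_M|\nabla A|^2\,dv=4\int_M K^2\,dv+\tfrac12|A|^2(|A|^2-2)\operatorname{Vol}(M)+\int_M R\,dv$. These are only the integrated forms of the two pointwise identities and are a priori consistent with $|A|^2>2$, so closing the gap needs genuinely new input; I would attempt a maximum‑principle/pinching analysis of $H^2$ through its $\mathcal{L}H^2$‑equation (exploiting the pointwise bound $H^2\le 2|A|^2$) to prove that $H^2$, and hence $K$, is constant and equal to its average. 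This is the step I expect to be the most delicate.

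Finally, once $|A|^2=2$ and $K\equiv0$ are established, the surface is a compact flat Lagrangian self‑shrinker; the $\mathcal{L}H^2$‑identity then gives $R\equiv|\nabla^{\bot}\mathbf{H}|^2$ pointwise, and flatness lets one split $M$ isometrically as a Riemannian product of two closed planar self‑shrinking (Abresch--Langer) curves, as in the method of Theorem~\ref{thm1.3}. Constancy of $|A|^2=2$ forces each factor to have constant geodesic curvature, hence to be a round circle of radius $1$, so $M$ is the Clifford torus $\mathbb{S}^1(1)\times\mathbb{S}^1(1)$, which indeed has $|A|^2=2$. This completes the argument.
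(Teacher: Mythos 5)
Your preparatory work is correct: the pointwise identity obtained from Lemma~\ref{lem3.1} when $\mathcal{L}|A|^2=0$, its rewriting as $|\nabla A|^2=4K^2+\tfrac12|A|^2(|A|^2-2)+R$ with $R\ge 0$, and the lower bound $|A|^2\ge 2$ from Gauss--Bonnet plus Smoczyk's nonexistence of Lagrangian self-shrinking spheres (this is exactly the mechanism of Lemma~\ref{lem3.3}, run with $|A|^2$ constant). But at the decisive step --- the upper bound $|A|^2\le 2$, and the flatness $H^2\equiv|A|^2$ that your splitting endgame needs as input --- you prove nothing: you only announce that you ``would attempt a maximum-principle/pinching analysis of $H^2$''. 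Since ruling out $|A|^2>2$ (equivalently, genus $\ge 2$) is the entire content of the proposition beyond Lemma~\ref{lem3.3}, this is a genuine gap, not a missing detail. Moreover, the integral identities you offer as a starting point are incorrect as written: $\mathcal{L}$ is self-adjoint only in the weighted space $L^2(e^{-|x|^2/2}dv)$, so integrating $\mathcal{L}H^2$ or $\mathcal{L}|A|^2$ against $dv$ need not give zero; and even with the correct measure, the identity $\tfrac12\mathcal{L}H^2=|\nabla^{\bot}\mathbf{H}|^2+H^2-\sum_{i,j,k,l}H^{k^*}H^{l^*}h_{ij}^{k^*}h_{ij}^{l^*}$ produces the integrand $|p|^2-H^2$, not your $R=|p|^2-\tfrac12H^4$, so the claimed relation $\int|\nabla^{\bot}\mathbf{H}|^2\,dv=\int R\,dv$ fails on both counts. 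A maximum-principle argument on $H^2$ alone also looks insufficient, because the available pointwise inequalities ($|p|^2\le H^2|A|^2$, $H^2\le 2|A|^2$) are consistent with constant $|A|^2>2$.

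The idea you are missing --- the heart of the paper's proof --- is to exploit the position vector itself and work at a point $p_0$ where $|x|^2$ attains its \emph{minimum}. There $\langle x,e_j\rangle=0$, so $x=-\mathbf{H}$ at $p_0$ and, by \eqref{3.2}, $H^{k^*}_{,i}(p_0)=0$; combined with $(|A|^2)_{,k}\equiv 0$ this yields the algebraic system \eqref{3.7}--\eqref{3.8} for the components of $\nabla A$, which by total symmetry factors into the dichotomy \eqref{square}. Either $h_{11}^{1^*}=3h_{22}^{1^*}$ and $h_{22}^{2^*}=3h_{11}^{2^*}$ at $p_0$, whence $|A|^2=\tfrac34H^2(p_0)=\tfrac34|x|^2(p_0)\le\tfrac32$ --- because the weighted identity of Lemma~\ref{lem3.2} forces $\min_M|x|^2\le 2$ --- contradicting your own bound $|A|^2\ge2$; or $\nabla A(p_0)=0$, and then (after disposing of $H(p_0)=0$, which would give $|A|^2\in\{0,\tfrac23\}$) your pointwise identity, written in a frame with $e_1\parallel J\mathbf{H}$, becomes the perfect-square identity \eqref{n319}, $|A|^2-\tfrac12|A|^4=(|A|^2-H^2)^2+H^2\sum_{i,j}(h_{ij}^{1^*}-\tfrac12H\delta_{ij})^2\ge0$, i.e.\ $|A|^2\le 2$. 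With your lower bound this gives $|A|^2=2$; equality in \eqref{n319} then forces $H^2(p_0)=2=|x|^2(p_0)$, and since $p_0$ is the minimum, Lemma~\ref{lem3.2} upgrades this to $|x|^2\equiv2$, whence $\mathbf{H}=-x$ is parallel, $H^2\equiv2$ and $K\equiv0$. Only from this point onward does your final step make sense; it is in fact a legitimate alternative ending, since Propositions~\ref{prop5.3} and \ref{prop5.5} do not rely on Proposition~\ref{prop3.5}, and a product of closed Abresch--Langer curves with $k_1^2+k_2^2\equiv2$ does force both factors to be unit circles, whereas the paper instead integrates the structure equations explicitly. But neither ending can begin without the minimum-point argument you left unexecuted.
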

\begin{proof}
We prove by two steps.
Firstly, we show that  $|A|^2=2$.

Since $M^2$ is compact, there exists a point $p_0\in M^2$ such that  $|x|^2$ attains its minimum at $p_0$.
We immediately have $(|x|^2)_{,j}=0,~1\leq j\leq 2$ at $p_0$, which implies that $\l x,e_j\r(p_0)=0,~1\leq j\leq 2$.
Hence at $p_0$, from \eqref{3.1} and \eqref{3.2} we have $x=-\mathbf{H},~|x|^2=H^2$, $H^{k^*}_{,i}=0,~1\leq i,k\leq 2$,
which lead to the following equations:
\begin{equation}\label{3.7}
h_{11,1}^{1^*}+h_{22,1}^{1^*}=0,~h_{11,2}^{1^*}+h_{22,2}^{1^*}=0,~h_{11,2}^{2^*}+h_{22,2}^{2^*}=0.
\end{equation}
On the other hand, since $|A|^2=(h_{11}^{1^*})^2+3(h_{12}^{1^*})^2+3(h_{12}^{2^*})^2+(h_{22}^{2^*})^2$ is constant, we have $(|A|^2)_{,k}=0,~1\leq k\leq 2$. Therefore,
\begin{equation}\label{3.8}
\begin{aligned}
&h_{11}^{1^*}h_{11,1}^{1^*}+3h_{12}^{1^*}h_{12,1}^{1^*}+3h_{12}^{2^*}h_{12,1}^{2^*}+h_{22}^{2^*}h_{22,1}^{2^*}=0,\\
&h_{11}^{1^*}h_{11,2}^{1^*}+3h_{12}^{1^*}h_{12,2}^{1^*}+3h_{12}^{2^*}h_{12,2}^{2^*}+h_{22}^{2^*}h_{22,2}^{2^*}=0.
\end{aligned}
\end{equation}
From \eqref{3.7}, using \eqref{2.9}, we get
\begin{equation}\label{n38}
h_{22,1}^{1^*}=-h_{11,1}^{1^*},~h_{22,2}^{1^*}=-h_{11,2}^{1^*},~h_{22,2}^{2^*}=h_{11,1}^{1^*}.
\end{equation}
Since  $h_{ij}^{k^*}$  and $h_{ij,l}^{k^*}$ are both totally symmetric (see \eqref{2.3} and \eqref{2.9}), by substituting \eqref{n38} into \eqref{3.8}, we obtain
\begin{equation}\label{square1}
(h_{11}^{1^*}-3h_{22}^{1^*}) h_{11,1}^{1^*} -(h_{22}^{2^*} -3h_{11}^{2^*})h_{11,2}^{1^*} =0,
\end{equation}
\begin{equation}\label{square2}
(h_{22}^{2^*} -3h_{11}^{2^*})h_{11,1}^{1^*}+(h_{11}^{1^*}-3h_{22}^{1^*}) h_{11,2}^{1^*}  =0.
\end{equation}
Taking the sum of the square of \eqref{square1} and the square of \eqref{square2}, we get
\begin{equation}\label{square}
[(h_{11}^{1^*}-3h_{22}^{1^*})^2+(h_{22}^{2^*} -3h_{11}^{2^*})^2][(h_{11,1}^{1^*})^2+ (h_{11,2}^{1^*})^2]=0.
\end{equation}
Hence, from \eqref{square}, we have the following two possibilities:

(i) At $p_0$, $h_{11}^{1^*}=3h_{22}^{1^*},~h_{22}^{2^*}=3h_{11}^{2^*}$. In this case, $|A|^2(p_0)=\frac{4}{3}((h_{11}^{1^*})^2+(h_{22}^{2^*})^2),H^2(p_0)=\frac{16}{9}((h_{11}^{1^*})^2+(h_{22}^{2^*})^2)$. From Lemma \ref{lem3.2}, as $|x|^2$ attains its minimum at $p_0$,
we obtain that
$H^2(p_0)=|x|^2(p_0)\leq 2$, hence $|A|^2=|A|^2(p_0)=\frac{3}{4}H^2(p_0)\leq\frac{3}{2}$, which is impossible by Lemma \ref{lem3.3}.

(ii) At $p_0$,  $h_{11,1}^{1^*}=h_{11,2}^{1^*}=0$. Then from \eqref{n38}, we have $h_{22,1}^{1^*}=h_{22,2}^{1^*}=h_{22,2}^{2^*}=0$.
In this case, $|\nabla A|=0$ at $p_0$. By Lemma \ref{lem3.1}, $|A|^2-\frac{3}{2}|A|^4+2H^2|A|^2-\frac{1}{2}H^4-\sum_{i,j,k,l}H^{k^*}H^{l^*}h_{ij}^{k^*}h_{ij}^{l^*}=0$ at $p_0$.
If $H(p_0)=0$, then we have $|A|^2-\frac{3}{2}|A|^4=0$, which implies that $|A|^2=0$ or $|A|^2=\frac{2}{3}$. By Lemma \ref{lem3.3}, this can not occur, so we have  $H(p_0)\neq0.$
Since at $p_0$, $\mathbf{H}\neq 0$, we choose local orthonormal frame $\{e_1,e_2\}$ such that $e_1//J\mathbf{H}$ and $H^{1^*}=H,~H^{2^*}=0$, then at $p_0$, we have
\begin{equation}\label{n319}
\begin{aligned}
|A|^2-\frac{1}{2}|A|^4&=|A|^4-2H^2|A|^2+\frac{1}{2}H^4+\sum_{i,j}H^2(h_{ij}^{1^*})^2\\
&=(|A|^2-H^2)^2+H^2\sum_{i,j}(h_{ij}^{1^*}-\frac{1}{2}H\delta_{ij})^2\geq 0,\\
\end{aligned}
\end{equation}
which implies that
$|A|^2=|A|^2(p_0)\leq 2$. Using Lemma \ref{lem3.3},
we know that $|A|^2\equiv 2$.

Secondly, we show that $M^2$ is the Clifford torus $\mathbb{S}^1(1)\times\mathbb{S}^1(1)$.

From the  arguments above, we know that  $|\nabla A|=0$ at $p_0$. Moreover, since $|A|^2=2=\frac{1}{2}|A|^4$,
from \eqref{n319}, we know that $(|A|^2-H^2)^2+H^2\sum_{i,j}(h_{ij}^{1^*}-\frac{1}{2}H\delta_{ij})^2=0$ at $p_0$, which immediately implies that $|A|^2=H^2$ and $h_{ij}^{1^*}=\frac{1}{2}H\delta_{ij}$ at $p_0$,
we also know that $H^{1^*}=h_{11}^{1^*}+h_{22}^{1^*}=H,~H^{2^*}=h_{11}^{2^*}+h_{22}^{2^*}=0$ at $p_0$, hence we get that
$h_{11}^{1^*}=h_{22}^{1^*}=\frac{1}{2}H$ and $h_{11}^{2^*}=h_{22}^{2^*}=0$ at $p_0$.
Therefore, we have $2=|A|^2=|A|^2(p_0)=H^2(p_0)=|x|^2(p_0)$. Since $|x|^2$ attains its minimum at
$p_0$, we get $|x|^2\geq 2$, which together with Lemma \ref{lem3.2} imply that $|x|^2\equiv2$.

Since $|x|^2\equiv2$, we have that $\langle x,e_i\rangle=0,~i=1,2$, which means that the position vector $x$ is equal to $x^{\bot}$. Using the self-shrinker equation \eqref{1.1}, we immediately have that
$\mathbf{H}=-x$ and $H^2=|x|^2\equiv2$. In particular, we get that $H=|\mathbf{H}|=\sqrt{2}$ and from \eqref{GW} we have $$\nabla_{e_i}^{\bot}\mathbf{H}=D_{e_i}\mathbf{H}+A_{\mathbf{H}}e_i=D_{e_i}(-x)+A_{\mathbf{H}}e_i=-e_i+A_{\mathbf{H}}e_i,~i=1,2,$$
where in the last equality we use the fact that $x$ is the position vector. In the  equation above, $\nabla_{e_i}^{\bot}\mathbf{H}$ is a normal vector, $-e_i+A_{\mathbf{H}}e_i$ is a tangent vector, we get that both of them have to vanish, so we obtain that $\mathbf{H}$ is a non-null parallel normal vector field and hence $J\mathbf{H}$ is a non-null parallel tangent vector field on $M^2$. We have also shown that  $|A|^2=2$.
It follows that $H^{k^*}_{,i}=0,~ (|A|^2)_{,k}=0,~1\leq i,k\leq 2$, which means that
both \eqref{3.7} and \eqref{3.8}  hold at $\forall~ p\in M^2$. Thus,  $\forall~ p\in M^2$, using an analogous argument to that in the first step of the proof,
there are two possibilities.
(i) At $p$, $h_{11}^{1^*}=3h_{22}^{1^*},~h_{22}^{2^*}=3h_{11}^{2^*}$. In this case, $|A|^2=\frac{3}{4}H^2$, which is a contradiction with $|A|^2=H^2=2$.
(ii) At $p$,  $h_{11,1}^{1^*}=h_{11,2}^{1^*}=h_{22,1}^{1^*}=h_{22,2}^{1^*}=h_{22,2}^{2^*}=0$.
Hence, we obtain that $|\nabla A|=0$,~$\forall~ p\in M^2.$

Since $H=\sqrt{2}\neq 0$, we  choose local orthonormal frame $\{e_1,e_2\}$ such that $e_1//J\mathbf{H}$ and $H^{1^*}=H,~H^{2^*}=0$.
As $|\nabla A|=0$, we get  that  \eqref{n319} holds at $\forall~ p\in M^2$.  As $|A|^2=2=\frac{1}{2}|A|^4$,
from \eqref{n319}, we know that $(|A|^2-H^2)^2+H^2\sum_{i,j}(h_{ij}^{1^*}-\frac{1}{2}H\delta_{ij})^2=0$, we also know that $H^{1^*}=H,~H^{2^*}=0$, hence  under the orthonormal frame $\{e_1,e_2\}$ chosen above, we have
  $h_{11}^{1^*}=\frac{\sqrt{2}}{2},~h_{22}^{1^*}=\frac{\sqrt{2}}{2},~h_{11}^{2^*}=0,~h_{22}^{2^*}=0$.

In the following, we will determine the explicit expression of the immersion $x$, up to an isometry of $\mathbb{C}^2$.
Since $H=\sqrt{2}$ is constant and $J\mathbf{H}$ is a non-null parallel tangent vector field on $M^2$, we get that
$e_1$ is parallel on $M^2$, hence $\nabla_{e_i}e_j=0,i,j=1,2.$  Therefore, there exist local coordinates $\{u,v\}$
such that $e_1=\frac{\partial}{\partial u},e_2=\frac{\partial}{\partial v}$.
 Since $e_1=\frac{\partial}{\partial u}$ and $e_2=\frac{\partial}{\partial v}$ are orthonormal, $x$ is a Lagrangian immersion, we get
\begin{equation}\label{x1}
\l x_u,x_u\r=\l x_v,x_v\r=1,~\l x_u,x_v\r=\l x_u,i x_v\r=0.
\end{equation}
From $\nabla_{e_i}e_j=0,i,j=1,2$ and  $h_{11}^{1^*}=\frac{\sqrt{2}}{2},~h_{22}^{1^*}=\frac{\sqrt{2}}{2},~h_{11}^{2^*}=0,~h_{22}^{2^*}=0$,
we have
\begin{equation}\label{x2}
\begin{aligned}
x_{uu}=x_{vv}=\frac{\sqrt{2} i}{2}x_u,
~x_{uv}=\frac{\sqrt{2} i}{2}x_v.
\end{aligned}
\end{equation}
The self-shrinker condition \eqref{1.1} and $|x|^2=2$ imply that
 \begin{equation}\label{x3}
\mathbf{H}=x_{uu}+x_{vv}=-x, ~\l x,x\r=2.
\end{equation}
Using \eqref{x1}-\eqref{x3}, we obtain the following explicit expression of $x$.
\begin{equation}\label{x4}
x(u,v)=e^{\frac{iu}{\sqrt{2}}}(a_1e^{\frac{iv}{\sqrt{2}}}+a_2e^{\frac{-iv}{\sqrt{2}}},b_1e^{\frac{iv}{\sqrt{2}}}+b_2e^{\frac{-iv}{\sqrt{2}}})\in\mathbb{C}^2,
\end{equation}
where $a_1,a_2,b_1,b_2$ are constant complex numbers satisfying that
$a_1\bar{a_1}+b_1\bar{b_1}=a_2\bar{a_2}+b_2\bar{b_2}=1,a_1\bar{a_2}+b_1\bar{b_2}=0$. Therefore, up to an isometry of $\mathbb{C}^2$, $x$ is  congruent with
\begin{equation}\label{x5}
x(u,v)=e^{\frac{iu}{\sqrt{2}}}(e^{\frac{iv}{\sqrt{2}}},e^{\frac{-iv}{\sqrt{2}}})\in\mathbb{C}^2.
\end{equation}
We choose local coordinates $s,t$ such that $s=\frac{u+v}{\sqrt{2}},t=\frac{u-v}{\sqrt{2}}$, then $x$ is congruent with
\begin{equation}\label{x6}
x(s,t)=(e^{is},e^{it})\in\mathbb{C}^2,
\end{equation}
which is the standard expression of the Clifford torus $\mathbb{S}^1(1)\times\mathbb{S}^1(1)$ in $\mathbb{C}^2$.
\end{proof}

\noindent \textbf{Proof of Theorem \ref{thm1.2}~:}
Under the assumptions of Theorem \ref{thm1.2}, from Lemma \ref{lem3.3}, we know that $|A|^2=2$,  which means that $|A|^2$ is constant. Then applying our key Proposition \ref{prop3.5},   we obtain that  $M^2$ is the Clifford torus $\mathbb{S}^1(1)\times\mathbb{S}^1(1)$.
\qed

\begin{rem}
If $x:M^2\to\mathbb{R}^3$ is a compact orientable embedded self-shrinker with $|A|^2\leq 2$, then it follows from Gauss equation and Gauss-Bonnet theorem that
\begin{equation*}8\pi(1-\text{gen}(M^2))=2\int_M^2K dv=\int_M^2(H^2-|A|^2)dv=\int_M^2(2-|A|^2)dv,\end{equation*}
where $\text{gen}(M^2)$ stands for the genus of $M^2$, $K$ is the Gauss curvature of $M^2$ and the last equality is due to the following identity by using the self-shrinker equation \eqref{1.1}: $$\frac{1}{2}\Delta (|x|^2)=2+\l x,\Delta x\r=2+\l x,\mathbf{H}\r=2-H^2.$$
From $|A|^2\leq 2$ it follows  that either (i) the genus of $M^2$ is 0, or (ii) the genus of $M^2$ is 1 and $|A|^2=2$. If the genus of $M^2$ is 0, then Brendle's result (see \cite{Brendle}, Theorem 1)
implies that $M^2$ is the round sphere $\mathbb{S}^2(\sqrt{2})$.
If $M^2$ is a $2$-dimensional closed self-shrinker in $\mathbb{R}^3$ satisfying that $|A|^2$ is constant, then Ding and Xin's result (see Theorem 4.2 of \cite{DX}, see also \cite{Guang} for a new proof)  implies that  $|A|^2=1$.
Therefore, case (ii) the genus of $M^2$ is 1 and $|A|^2=2$ can not occur. So we obtain the following new characterization of the round sphere as a self-shrinker.
\end{rem}
\begin{prop}
Let $x:M^2\to\mathbb{R}^3$ be a compact orientable embedded self-shrinker. If $|A|^2\leq 2$, then $|A|^2=1$ and $M^2$ is the round sphere $\mathbb{S}^2(\sqrt{2})$.
\end{prop}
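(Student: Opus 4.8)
The plan is to close the gap between topology and geometry using a Gauss--Bonnet argument and then to invoke two known classification theorems, precisely as in the codimension-one counterpart of Lemma~\ref{lem3.3}. First I would record the elementary integral identity valid for any closed self-shrinker of $\mathbb{R}^3$: from the self-shrinker equation \eqref{1.1} and $\Delta x=\mathbf H$ one computes $\tfrac12\Delta(|x|^2)=2+\langle x,\mathbf H\rangle=2-H^2$, so that integrating over the closed surface $M^2$ gives $\int_M H^2\,dv=\int_M 2\,dv$. Combining this with the Gauss equation $2K=H^2-|A|^2$ and the Gauss--Bonnet theorem yields
\begin{equation*}
8\pi\bigl(1-\text{gen}(M^2)\bigr)=2\int_M K\,dv=\int_M\bigl(H^2-|A|^2\bigr)\,dv=\int_M\bigl(2-|A|^2\bigr)\,dv.
\end{equation*}

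Next I would feed in the pinching hypothesis $|A|^2\le 2$. Since the right-hand side above is then nonnegative, $\text{gen}(M^2)\le 1$, leaving exactly two possibilities. If $\text{gen}(M^2)=0$, then $M^2$ is a topological sphere; if $\text{gen}(M^2)=1$, the left-hand side vanishes, forcing $\int_M(2-|A|^2)\,dv=0$, and since the integrand is nonnegative I conclude $|A|^2\equiv 2$, a constant.

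The two cases are then dispatched by invoking deep rigidity theorems rather than by any direct computation. In the genus-zero case, Brendle's theorem \cite{Brendle}, which asserts that the only genus-$0$ closed self-shrinkers in $\mathbb{R}^3$ are round spheres, identifies $M^2$ with $\mathbb{S}^2(\sqrt2)$; its principal curvatures are both $1/\sqrt2$, so $|A|^2=1$. In the genus-one case, where $|A|^2$ is the constant $2$, Ding--Xin's rigidity result (\cite{DX}; see also \cite{Guang}), stating that a closed self-shrinker in $\mathbb{R}^3$ with constant $|A|^2$ must satisfy $|A|^2=1$, contradicts $|A|^2=2$. Hence genus one cannot occur, and the proposition follows, with $|A|^2=1$ and $M^2=\mathbb{S}^2(\sqrt2)$.

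As for where the difficulty lies, the elementary portion—the Gauss--Bonnet bound on the genus and the bookkeeping that the constant $2$ is exactly the threshold separating the two topological types—is routine, and the real weight of the argument rests on the two imported classification theorems. The hardest input will be Brendle's genus-zero classification, whose proof uses a maximum-principle/Simons-type analysis far beyond the present computations; the only things I must verify by hand are that $\mathbb{S}^2(\sqrt2)$ realizes $|A|^2=1$ rather than $2$, and that the genus-one alternative is excluded precisely because constant $|A|^2=2$ is incompatible with the Ding--Xin value $|A|^2=1$.
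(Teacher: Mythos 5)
Your proposal is correct and follows essentially the same route as the paper: the identity $\tfrac12\Delta(|x|^2)=2-H^2$ combined with the Gauss equation and Gauss--Bonnet to force genus $0$, or genus $1$ with $|A|^2\equiv 2$; then Brendle's genus-zero classification \cite{Brendle} in the first case and Ding--Xin's constant-$|A|^2$ rigidity \cite{DX} (see also \cite{Guang}) to exclude the second. The only cosmetic caveat is to keep the word \emph{embedded} when quoting Brendle's theorem, since that hypothesis is essential there (and is indeed part of the proposition's assumptions).
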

\section{Proof of Theorem \ref{thm1.3}}
In this section, we prove Theorem \ref{thm1.3}. We also prove that a compact orientable Lagrangian self-shrinker in $\mathbb{C}^2$ with constant Gauss curvature  must be the Riemannian product of two closed  Abresch-Langer curves.
As an application, we obtain several new characterizations of the Clifford torus as a Lagrangian self-shrinker in $\mathbb{C}^2$.

\begin{lem}\label{lem5.1}
Let $x:M^2\to \mathbb{C}^2$ be a compact orientable Lagrangian self-shrinker. If the Gauss curvature $K$ of $M^2$ is nonnegative, then $K=0$ and $M^2$ is a topological torus.
\end{lem}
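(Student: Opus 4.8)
The plan is to reuse the Gauss--Bonnet strategy from the proof of Lemma \ref{lem3.3}, but with the curvature sign hypothesis $K\geq 0$ playing the role previously occupied by the pinching condition $|A|^2\leq 2$. First I would combine the Gauss equation $R=2K=H^2-|A|^2$ with the Gauss--Bonnet theorem to record the identity
\begin{equation*}
\int_{M^2}K\,dv=2\pi\chi(M^2)=4\pi\bigl(1-\mathrm{gen}(M^2)\bigr),
\end{equation*}
where $\mathrm{gen}(M^2)$ denotes the genus of the compact orientable surface $M^2$ and $\chi(M^2)$ its Euler characteristic.

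Next I would exploit the sign condition. Since $K\geq 0$ by hypothesis, the left-hand side of the displayed identity is nonnegative, which forces $1-\mathrm{gen}(M^2)\geq 0$, i.e.\ $\mathrm{gen}(M^2)\leq 1$. On the other hand, Smoczyk's nonexistence theorem (see \cite{Smo00}, Theorem 2.3.5, and also Theorem 2.1 in \cite{Castro}) asserts that there is no compact Lagrangian self-shrinker in $\mathbb{C}^n$ with the topology of the sphere, so $\mathrm{gen}(M^2)\neq 0$. Combining the two constraints yields $\mathrm{gen}(M^2)=1$, so that $M^2$ is a topological torus.

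Finally I would feed this topological conclusion back into the Gauss--Bonnet identity. Because $M^2$ is a torus we have $\chi(M^2)=0$, whence $\int_{M^2}K\,dv=0$. As the integrand $K$ is pointwise nonnegative and its total integral vanishes, I conclude that $K\equiv 0$, completing the proof.

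I do not expect a genuine obstacle in this argument: it is a short topological and integral-geometric deduction whose three ingredients---the Gauss equation, the Gauss--Bonnet theorem, and Smoczyk's nonexistence of Lagrangian self-shrinking spheres---are all already at hand. The only points meriting care are the correct bookkeeping between Euler characteristic and genus for an orientable surface, and the explicit invocation of Smoczyk's theorem to exclude the genus-zero possibility left open by the curvature inequality alone.
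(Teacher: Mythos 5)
Your proposal is correct and follows essentially the same route as the paper's proof: Smoczyk's nonexistence theorem rules out genus zero, Gauss--Bonnet together with $K\geq 0$ rules out genus at least two, and the vanishing of $\int_{M^2}K\,dv$ for the torus combined with pointwise nonnegativity forces $K\equiv 0$. The only cosmetic difference is that your opening invocation of the Gauss equation $2K=H^2-|A|^2$ is never actually used, since the argument runs entirely on Gauss--Bonnet and the topological input.
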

\begin{proof}
Using the fact that there exist no Lagrangian self-shrinkers in $\mathbb{C}^n$ with the topology of sphere,which
was proved by Smoczyk (see \cite{Smo00}, Theorem 2.3.5, see also Theorem 2.1 in \cite{Castro} for a detailed proof), we get $\text{gen}(M^2)\geq 1$, where $\text{gen}(M^2)$ stands for the genus of $M^2$.
From Gauss-Bonnet theorem, we derive
\begin{equation}4\pi(1-\text{gen}(M^2))=\int_M^2K dv.\end{equation}
If $K\geq 0$,  then $\text{gen}(M^2)\leq 1$.  Hence, if  $K\geq 0$, then $K=0$ and $M^2$ is a topological torus.
\end{proof}

\begin{prop}\label{prop5.2}
Let $x:M^2\to \mathbb{C}^2$ be a compact orientable Lagrangian self-shrinker. If the Gauss curvature $K$ of $M^2$ is constant, then $K=0$ and $M^2$ is a topological torus.
\end{prop}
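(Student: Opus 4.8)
The plan is to reduce, via Gauss--Bonnet, to ruling out a negative constant, and then to produce a pointwise differential identity relating $|\nabla A|^2$ and $|\nabla^{\bot}\mathbf{H}|^2$ that can be exploited at a maximum point of $|x|^2$. First I would record the topological input exactly as in Lemma~\ref{lem5.1}: since there is no Lagrangian self-shrinker in $\mathbb{C}^2$ with the topology of a sphere, $\mathrm{gen}(M^2)\geq 1$, and Gauss--Bonnet gives
\[
K\cdot\mathrm{Area}(M^2)=\int_{M^2}K\,dv=4\pi\bigl(1-\mathrm{gen}(M^2)\bigr)\leq 0 ,
\]
so the constant $K$ satisfies $K\leq 0$, with $K=0$ forcing $\mathrm{gen}(M^2)=1$ (a torus). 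It therefore suffices to derive a contradiction from the standing assumption $K<0$.

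The heart of the argument is a pointwise identity. Since $K$ is constant, the Gauss equation \eqref{Gauss} gives $2K=H^2-|A|^2$, so $|A|^2-H^2=-2K$ is constant and hence $\mathcal{L}|A|^2=\mathcal{L}H^2$. Tracing the self-shrinker identity \eqref{3.3} and using $\sum_i h_{im,i}^{k^*}=H^{k^*}_{,m}$, I would compute
\[
\tfrac{1}{2}\mathcal{L}H^2=|\nabla^{\bot}\mathbf{H}|^2+H^2-\sum_{i,j,k,l}H^{k^*}H^{l^*}h_{ij}^{k^*}h_{ij}^{l^*} .
\]
Comparing this with Lemma~\ref{lem3.1}, the quartic term $\sum H^{k^*}H^{l^*}h_{ij}^{k^*}h_{ij}^{l^*}$ cancels, and after substituting $H^2=|A|^2+2K$ the remaining polynomial in $|A|^2,H^2$ collapses, leaving the clean pointwise identity
\[
|\nabla A|^2-|\nabla^{\bot}\mathbf{H}|^2=2K\bigl(1+K-|A|^2\bigr)\qquad\text{on }M^2 .
\]

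I would then evaluate at a maximum point $p_1$ of $|x|^2$. At a critical point $\nabla|x|^2=2\sum_i\langle x,e_i\rangle e_i=0$, so $\langle x,e_i\rangle(p_1)=0$, whence \eqref{3.2} yields $H^{k^*}_{,i}(p_1)=0$; thus $|\nabla^{\bot}\mathbf{H}|(p_1)=0$, $(|A|^2)_{,k}(p_1)=(H^2)_{,k}(p_1)=0$, and $H^2(p_1)=|x|^2(p_1)=\max|x|^2\geq 2$ by Lemma~\ref{lem3.2}. Because both $H^{k^*}_{,i}=0$ and $(|A|^2)_{,k}=0$ hold at $p_1$, the purely algebraic step of Proposition~\ref{prop3.5} applies verbatim and gives the dichotomy: either (i) $h_{11}^{1^*}=3h_{22}^{1^*}$ and $h_{22}^{2^*}=3h_{11}^{2^*}$, in which case $|A|^2(p_1)=\tfrac{3}{4}H^2(p_1)$, so $2K=H^2(p_1)-|A|^2(p_1)=\tfrac{1}{4}H^2(p_1)\geq 0$, contradicting $K<0$; or (ii) $\nabla A(p_1)=0$, in which case the pointwise identity together with $K\neq 0$ forces $|A|^2(p_1)=1+K$, hence $H^2(p_1)=|A|^2(p_1)+2K=1+3K$, and combined with $H^2(p_1)\geq 2$ this gives $K\geq\tfrac{1}{3}$, again contradicting $K<0$. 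Either way the assumption $K<0$ is untenable, so $K=0$, and Gauss--Bonnet then forces genus one, i.e. $M^2$ is a topological torus.

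The main obstacle is the pointwise identity, and specifically the observation that working with $\mathcal{L}H^2$ (rather than $\mathcal{L}|A|^2$ alone) makes the awkward quartic term cancel; without this, case (ii) cannot be closed, since the inequality \eqref{n319} used in Proposition~\ref{prop3.5} is unavailable here because $|A|^2$ is no longer globally constant. The second delicate point is to test the identity at a \emph{maximum} of $|x|^2$, so that the bound $H^2(p_1)\geq 2$ converts case (ii) into the contradiction $K\geq\tfrac{1}{3}$; a minimum point alone, where only $H^2\leq 2$ is available, does not suffice to close the argument.
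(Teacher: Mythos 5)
Your proposal is correct and follows essentially the same route as the paper's own proof: the identity you derive from comparing $\tfrac12\mathcal{L}H^2$ with Lemma~\ref{lem3.1} is exactly the paper's formula for $\mathcal{L}K$ (with $\mathcal{L}K\equiv 0$, rewritten via $H^2=|A|^2+2K$), and the paper likewise evaluates at a maximum of $|x|^2$, invokes the algebraic dichotomy of Proposition~\ref{prop3.5}, and uses $H^2(p_0)=|x|^2(p_0)\geq 2$ from Lemma~\ref{lem3.2} to rule out the bad factor. The only differences are cosmetic: you argue by contradiction from $K<0$, while the paper concludes $K=0$ directly in each case of the dichotomy.
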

\begin{proof}
First, using Lemma \ref{lem5.1}, if $K\geq 0$, then $K=0$. Hence, if $K$ is constant,  then $K\leq 0$.
Next, we prove that $K=0$.
It follows from \eqref{3.3} and  $
\mathcal{L}=\Delta-\l x, \nabla\cdot\r
$that
\begin{equation}\label{DeltaH}
\begin{aligned}
\frac{1}{2}\mathcal{L} |\mathbf{H}|^2&=\frac{1}{2}\mathcal{L} H^2=\sum_{k,i}(H^{k^*}_{,i})^2+H^2-\sum_{i,j,k,l}H^{k^*}H^{l^*}h_{ij}^{k^*}h_{ij}^{l^*}\\
&=|\nabla^{\bot}\mathbf{H}|^2+H^2-\sum_{i,j,k,l}H^{k^*}H^{l^*}h_{ij}^{k^*}h_{ij}^{l^*},
\end{aligned}
\end{equation}
on the other hand, from Lemma \ref{lem3.1} we know that
\begin{equation}\label{DeltaA2}
\begin{aligned}
\frac{1}{2}\mathcal{L} |A|^2=|\nabla A|^2+|A|^2-\frac{3}{2}|A|^4+2H^2|A|^2-\frac{1}{2}H^4-\sum_{i,j,k,l}H^{k^*}H^{l^*}h_{ij}^{k^*}h_{ij}^{l^*}.
\end{aligned}
\end{equation}
Therefore, using Gauss equation $2K=H^2-|A|^2$, we derive
\begin{equation}\label{DeltaK}
\begin{aligned}
\mathcal{L} K=|\nabla^{\bot}\mathbf{H}|^2-|\nabla A|^2+H^2-(|A|^2-\frac{3}{2}|A|^4+2H^2|A|^2-\frac{1}{2}H^4).
\end{aligned}
\end{equation}

As $M^2$ is compact, there exists a point $p_0\in M^2$ such that  $|x|^2$ attains its maximum at $p_0$.
We immediately have $(|x|^2)_{,j}=0,~1\leq j\leq 2$ at $p_0$, which implies that $\l x,e_j\r(p_0)=0,~1\leq j\leq 2$.
Hence at $p_0$, from \eqref{3.1} and \eqref{3.2} we have $x=-\mathbf{H},~|x|^2=H^2$, $H^{k^*}_{,i}=0,~1\leq i,k\leq 2$.
On the other hand, since $K$ is constant, $K_{,k}\equiv0,~1\leq k\leq 2$. Using Gauss equation  $2K=H^2-|A|^2$, we
get that $(|A|^2)_{,k}=0,~1\leq k\leq 2$ at $p_0$. Hence, \eqref{3.7} and \eqref{3.8} hold at $p_0$. Using the same argument as in the proof of
Proposition \ref{prop3.5}, we have the following two possibilities:

(i) At $p_0$, $h_{11}^{1^*}=3h_{22}^{1^*},~h_{22}^{2^*}=3h_{11}^{2^*}$. In this case, $|A|^2(p_0)=\frac{4}{3}((h_{11}^{1^*})^2+(h_{22}^{2^*})^2),H^2(p_0)=\frac{16}{9}((h_{11}^{1^*})^2+(h_{22}^{2^*})^2)$,
so we get $|A|^2(p_0)=\frac{3}{4}H^2(p_0)$, $0\geq K=K(p_0)=\frac{1}{2}(H^2(p_0)-|A|^2(p_0))=\frac{1}{8}H^2(p_0)\geq0$, which implies that
$K=0$.

(ii) At $p_0$,  $h_{11,1}^{1^*}=h_{11,2}^{1^*}=h_{22,1}^{1^*}=h_{22,2}^{1^*}=h_{22,2}^{2^*}=0$.
In this case, $|\nabla A|=0$ at $p_0$. Since $K$ is constant, we get $\mathcal{L} K\equiv 0$, then at $p_0$, from \eqref{DeltaK} we have
\begin{equation}
H^2-(|A|^2-\frac{3}{2}|A|^4+2H^2|A|^2-\frac{1}{2}H^4)=\frac{1}{2}(H^2-3|A|^2+2)(H^2-|A|^2)=0,
\end{equation}
from which we deduce that either $H^2(p_0)=3|A|^2(p_0)-2$ or $H^2(p_0)=|A|^2(p_0)$.
If $H^2(p_0)=3|A|^2(p_0)-2$, then $K=K(p_0)=\frac{1}{2}(H^2(p_0)-|A|^2(p_0))=\frac{1}{3}(H^2(p_0)-1)\leq 0$, so we get $H^2(p_0)\leq 1$.
On the other hand,  since $|x|^2$ attains its maximum at $p_0$, from Lemma \ref{lem3.2},
we deduce that
$H^2(p_0)=|x|^2(p_0)\geq 2$, which contradicts with $H^2(p_0)\leq 1$. So we get $H^2(p_0)=|A|^2(p_0)$, which implies that $K=K(p_0)=0$.

Therefore, we have proved that $K=0$. It follows from Gauss-Bonnet theorem that $M^2$ is a topological torus.
\end{proof}

\begin{prop}\label{prop5.3}
Let $x:M^2\to \mathbb{C}^2$ be a Lagrangian self-shrinker. If $M^2$ is flat, then $M^2$ is locally an open part of the Riemannian product of two Abresch-Langer curves.
\end{prop}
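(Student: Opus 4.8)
The plan is to extract everything from the single consequence of flatness, namely that the Gauss equation \eqref{Gauss} $2K=H^2-|A|^2$ forces $|A|^2=H^2$, and then to combine this pinching with the self-shrinker identities of Lemma \ref{lem3.0}. I would proceed in three stages: first use $|A|^2=H^2$ to diagonalize the second fundamental form in a suitable frame at each point with $\mathbf H\neq0$; then use Codazzi together with \eqref{3.2} to show that this frame is parallel; and finally integrate the structure equations exactly as in the proof of Proposition \ref{prop3.5} to recognize $x$ as a product of two planar curves, each solving the curve self-shrinker equation.

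For the first stage I would encode the second fundamental form in the totally symmetric cubic form $C(X,Y,Z)=\langle h(X,Y),JZ\rangle$ (see \eqref{2.1}), whose metric trace is precisely $\mathbf H$. Splitting $C$ into its trace part and its trace-free part $\mathring C$ and computing norms in dimension two gives $|A|^2=|\mathring C|^2+\tfrac34 H^2$, so flatness is equivalent to $|\mathring C|^2=\tfrac14 H^2$. A pointwise linear-algebra computation shows this is exactly the condition under which $C$ is diagonalizable by a rotation of the frame: on $U=\{p\in M^2:\mathbf H(p)\neq0\}$ one may choose a smooth orthonormal frame $\{e_1,e_2\}$ with
\[
h_{12}^{1^*}=h_{12}^{2^*}=0,\qquad h_{11}^{1^*}=:\lambda,\quad h_{22}^{2^*}=:\mu,
\]
all other components vanishing by \eqref{2.3}; in particular $\mathbf H=\lambda\,Je_1+\mu\,Je_2$.

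The second stage is the technical heart, and I expect it to be the main obstacle, since for a general (non-self-shrinking) flat Lagrangian surface the diagonalizing frame need not be parallel. Write $\omega=\theta_{12}$ for the connection form (note $\theta_{1^*2^*}=\theta_{12}$ because $e_{k^*}=Je_k$). Feeding the total symmetry \eqref{2.9} of $h_{ij,l}^{k^*}$ into the definition \eqref{2.7} yields, after cancellation,
\[
e_2\lambda=\lambda\,\omega(e_1),\qquad \lambda\,\omega(e_2)=-\mu\,\omega(e_1);
\]
on the other hand, the self-shrinker identity $H_{,2}^{1^*}=\sum_j h_{2j}^{1^*}\langle x,e_j\rangle=0$ from \eqref{3.2}, substituted into the definition \eqref{2.17}, gives $e_2\lambda=\mu\,\omega(e_2)$. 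Combining these produces the linear system $\lambda\,\omega(e_1)=\mu\,\omega(e_2)$ and $\mu\,\omega(e_1)=-\lambda\,\omega(e_2)$, whose determinant is $\lambda^2+\mu^2=H^2\neq0$ on $U$. Hence $\omega\equiv0$ on $U$: the frame is parallel, so $\langle e_1\rangle$ and $\langle e_2\rangle$ are orthogonal parallel distributions with $h(e_1,e_2)=0$.

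In the final stage I would introduce coordinates $(u,v)$ with $e_1=\partial_u$, $e_2=\partial_v$ (possible since the frame is parallel and $M^2$ is flat) and integrate \eqref{str1}--\eqref{str2}. The vanishing of $h(e_1,e_2)$ and of $\theta_{12}$ gives $x_{uv}=0$, hence $x(u,v)=\alpha(u)+\beta(v)$, while $x_{uu}=\lambda\,i\,x_u$ and $x_{vv}=\mu\,i\,x_v$ force $\alpha'$ and $\beta'$ to stay in fixed, Hermitian-orthogonal complex lines $\ell_1\perp_{\mathbb C}\ell_2$. After a unitary change of coordinates this is exactly the Riemannian product $x=(\gamma_1(u),\gamma_2(v))\in\mathbb C\times\mathbb C$. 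Using $\lambda=H^{1^*}=-\langle x,Je_1\rangle$ from \eqref{3.1}, the shrinker equation then decouples factorwise as $\gamma_i''=-\langle\gamma_i,J\gamma_i'\rangle\,J\gamma_i'$, the planar self-shrinker equation, so each $\gamma_i$ is an Abresch-Langer curve. The only remaining issue is the locus $\{\mathbf H=0\}$, where flatness forces $|A|^2=H^2=0$ and hence $h=0$; any component of its interior is then a totally geodesic Lagrangian plane, i.e. a degenerate product of two straight self-shrinking lines, so the desired local product description holds everywhere.
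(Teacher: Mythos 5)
Your proposal is correct, and its skeleton is the paper's: produce a frame diagonalizing $h$, show via Codazzi plus the shrinker identity that this frame is parallel, then integrate $x_{uv}=0$ to exhibit $x$ as a product of two planar self-shrinkers. The genuinely different ingredient is the first step. The paper gets the diagonal frame at a non-totally-geodesic point by maximizing $f(u)=\langle h(u,u),Ju\rangle$ on the unit circle, combining the Montiel--Urbano inequality $\lambda_1\geq 2\lambda_0$ with the flatness relation $\lambda_0(\lambda_1-\lambda_0)=0$ to force $\lambda_0=0$, and then gets smooth dependence of the frame from the implicit function theorem applied to \eqref{l}; you instead split the cubic form into trace and trace-free parts, establish $|A|^2=|\mathring{C}|^2+\tfrac34 H^2$ (correct in dimension two), so that by \eqref{Gauss} flatness is exactly $|\mathring{C}|^2=\tfrac14 H^2$, and match the spin-$3$ rotation of $\mathring{C}$ against the spin-$1$ rotation of $\mathbf{H}$ to rotate into a diagonal frame on $\{\mathbf{H}\neq 0\}$. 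Because flatness gives $|A|^2=H^2$, your set $\{\mathbf{H}\neq0\}$ is precisely the paper's set $U_2$ of non-totally-geodesic points, so the two arguments cover the same points; your version buys the clean equivalence ``flat $\Leftrightarrow$ $h$ diagonalizable'' and makes smooth local choice of the frame transparent (a smooth branch of one quarter of an argument), while the paper's extremal argument does not rely on a norm identity peculiar to surfaces. Your second stage is, after translation, literally the paper's: setting $\alpha=\omega(E_1)$ and $\beta=-\omega(E_2)$, your system $\lambda\omega(e_1)=\mu\omega(e_2)$, $\mu\omega(e_1)=-\lambda\omega(e_2)$ is exactly the pair \eqref{codazzi1}, \eqref{cs}, the only difference being that you obtain the shrinker relation from \eqref{3.2} and \eqref{2.17} instead of computing $\langle D_{E_1}(-x^{\bot}),JE_2\rangle$ directly; and your third stage is the paper's integration \eqref{xst}--\eqref{expressx} in different words. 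One shared imprecision, hence not a gap relative to the paper: both arguments treat $U_2$ and interior points of the totally geodesic locus but say nothing at boundary points of the latter; in the paper this is only repaired globally, via Lemma \ref{lem5.4}, in Proposition \ref{prop5.5}.
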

\begin{proof}
We define $U_1=\{p\in M^2~|p ~~\text{is a totally geodesic point}\}$, $U_2=M^2-U_1$.
If $p\in M^2$ is an interior point of  $U_1$, then  $M^2$ is locally the Riemannian product of two straight lines around $p$ (straight line is also a special Abresch-Langer curve).

In the following, without loss of generality,
we assume that $p\in U_2$, i.e., $p$ is not a totally geodesic point.
We denote $UM_p=\{u\in T_pM^2| ~|u|=1\}$, define $f(u)=\l h(u,u),Ju\r (u\in UM_p)$ and take  $e_1$ as a vector in which $f$ attains its maximum.
We choose $e_2\in T_pM^2$ as a unit vector which is orthogonal to $e_1$. As $f$ attains its maximum in $e_1$, we immediately have
$\l h(e_1,e_1),Je_2\r=0$, which implies that there exists  a number $\lambda_1>0$ such that $h(e_1,e_1)=\lambda_1Je_1$.
Since $\l h(X,Y),JZ\r$ is totally symmetric  (see \eqref{2.1}), there exist two numbers $\lambda_0$ and $\lambda_2$ such that $$h(e_1,e_2)=\lambda_0 Je_2,~h(e_2,e_2)=\lambda_0Je_1+\lambda_2Je_2.$$ Moreover, since $f$ attains its maximum in $e_1$, we have $\lambda_1\geq 2\lambda_0$, and if $\lambda_1=2\lambda_0$, then $\lambda_2=0$ (see Lemma 1 in \cite{MU}).
As $M^2$ is flat, from Gauss equation we have $$0=\l h(e_1,e_1),h(e_2,e_2)\r-\l h(e_1,e_2),h(e_1,e_2)\r=\lambda_0(\lambda_1-\lambda_0).$$
We claim that $\lambda_0=0$, if not,  $0<\lambda_1=\lambda_0$ which contradicts with $\lambda_1\geq 2\lambda_0$. So we obtain an orthonormal basis $e_1,e_2$ at $p$ such that
\begin{equation}
h(e_1,e_1)=\lambda_1Je_1,~h(e_1,e_2)=0,~h(e_2,e_2)=\lambda_2Je_2.
\end{equation}

Next, we prove that there exists a neighborhood $U$ of $p$, local orthonormal vector fields $E_1,E_2$  and local functions $\Lambda_1,\Lambda_2$ such that at each point $q\in U$, we have
\begin{equation}\label{frame}
h(E_1(q),E_1(q))=\Lambda_1(q)JE_1(q),~h(E_1(q),E_2(q))=0,~h(E_2(q),E_2(q))=\Lambda_2(q)JE_2(q).
\end{equation}
We choose an arbitrary orthonormal vector field $F_1,F_2$ in a neighborhood $V$ of $p$ such that $F_i(p)=e_i$. We denote $h_{ij}^k(q)=\l h(F_i(q),F_j(q)),JF_k(q)\r~(\forall~ q\in V)$  and consider the following system of equations:
\begin{equation}\label{l}
\left\{
\begin{aligned}
L_1(y^1(q),y^2(q),\Lambda_1(q))&:=\sum_{j,k}h_{jk}^1(q)y^j(q)y^k(q)-y^1(q)\Lambda_1(q)=0,\\
L_2(y^1(q),y^2(q),\Lambda_1(q))&:=\sum_{j,k}h_{jk}^2(q)y^j(q)y^k(q)-y^2(q)\Lambda_1(q)=0,\\
L_{3}(y^1(q),y^2(q),\Lambda_1(q))&:=(y^1(q))^2+(y^2(q))^2-1=0.
\end{aligned}
\right.
\end{equation}
If we denote $L=(L_1,L_2,L_3),Y=(y^1,y^2,\Lambda_1)$, then  $Y(p)=(1,0,\lambda_1)$ is a solution to $L(p)=0$,
and
\begin{equation}\label{h}
\left(\tfrac{\partial L_i}{\partial Y_j}\right)\Big|_p
=\left(
\begin{array}{cccc}
\lambda_1&0&-1\\
0&-\lambda_1&0\\
2&0&0
\end{array}
\right)
\end{equation}
is non-degenerate. Applying Implicit Function Theorem, there exists a unique smooth function $Y(q)=(y^1(q),y^2(q),\Lambda_1(q))$
satisfying \eqref{l} in an open set $V_1\subset V$, with initial value $Y(p)=(1,0,\lambda_1)$. If we define $E_1(q)=\sum_{i=1}^2y^i(q)F_i(q)$, then \eqref{l} implies that $E_1$  is a smooth unit vector field in $V_1$  and $h(E_1(q),E_1(q))=\Lambda_1(q)JE_1(q),~\forall ~q\in V_1$.

Assume that $E_2$ is a smooth unit vector field in $V_1$ such that $E_1$ and $E_2$ are orthogonal,
using the property that $\l h(X,Y),JZ\r$ is totally symmetric  (see \eqref{2.1}), we get that there exist two local functions $\Lambda_0$ and $\Lambda_2$ such that $$h(E_1,E_2)=\Lambda_0 JE_2,~h(E_2,E_2)=\Lambda_0 JE_1+\Lambda_2JE_2,~\forall ~q\in V_1.$$
As $(\Lambda_1-\Lambda_0)(p)=\lambda_1-\lambda_0=\lambda_1-0>0$, there exists an open set $U\subset V_1$ such that $$(\Lambda_1-\Lambda_0)(q)>0,~\forall~ q\in U.$$
Moreover, since $M^2$ is flat, from Gauss equation we have $$0=\l h(E_1,E_1),h(E_2,E_2)\r-\l h(E_1,E_2),h(E_1,E_2)\r=\Lambda_0(\Lambda_1-\Lambda_0),$$
so we derive $\Lambda_0=0,~\forall ~q\in U$. Therefore, we have found  a neighborhood $U$ of $p$, local orthonormal vector fields $E_1,E_2$  and local functions $\Lambda_1,\Lambda_2$ such that at each point $q\in U$, \eqref{frame} is satisfied.

In the following, we  use Codazzi equations and the self-shrinker equation to deduce that $x$ is locally a  product immersion.
As $E_1$ and $E_2$ are local orthonormal tangent vector fields, we can write the covariant derivatives as follows.
\begin{equation}\label{conn}
\nabla_{E_1}E_1=\alpha E_2,~\nabla_{E_1}E_2=-\alpha E_1,~\nabla_{E_2}E_1=-\beta E_2,~\nabla_{E_2}E_2=\beta E_1,
\end{equation}
where $\alpha$ and $\beta$ are local functions.
It follows from \eqref{frame}, \eqref{conn} and the Codazzi equation
$(\nabla_{E_1} h)(E_2,E_2)=(\nabla_{E_2} h)(E_1,E_2)$ that
\begin{equation}\label{codazzi1}
\alpha\Lambda_2-\beta\Lambda_1=0,
\end{equation}
\begin{equation}\label{codazzi3}
E_1(\Lambda_2)=\beta\Lambda_2.
\end{equation}
If we denote $x^{T}=x-x^{\bot}$, then $x^{T}$ is the tangent part of the position vector $x$. By using \eqref{GW}, \eqref{kahler},  \eqref{2.1}, \eqref{frame} and \eqref{conn}, we derive
 \begin{equation*}
\left\{
\begin{aligned}
&\l D_{E_1}(-x^{\bot}),JE_2\r=\l D_{E_1}(x^{T}-x),JE_2\r=\l D_{E_1}(x^{T})-D_{E_1}x,JE_2\r\\
&=\l
 D_{E_1}(x^{T})-E_1,JE_2\r=\l D_{E_1}(x^{T}),JE_2\r=\l
\nabla_{E_1}(x^{T})+h(E_1,x^{T}),JE_2\r\\
&=\l h(E_1,x^{T}),JE_2\r=\l h(E_1,E_2),Jx^{T}\r=\l 0,Jx^{T}\r=0,\\
&\l D_{E_1}\mathbf{H},JE_2\r=\l\nabla^{\bot}_{E_1}\mathbf{H},JE_2\r=\l\nabla^{\bot}_{E_1}(\Lambda_1JE_1+\Lambda_2JE_2),JE_2\r=E_1(\Lambda_2)+\alpha\Lambda_1,
\end{aligned}
\right.
\end{equation*}
 which combined with the self-shrinker equation \eqref{1.1} ($\mathbf{H}=-x^{\bot}$) imply
\begin{equation}\label{shrinker}
\begin{aligned}
E_1(\Lambda_2)+\alpha\Lambda_1=0.
\end{aligned}
\end{equation}
From \eqref{codazzi3} and \eqref{shrinker} we derive
\begin{equation}\label{cs}
\alpha \Lambda_1+\beta\Lambda_2=0.
\end{equation}
Taking the sum of the square of \eqref{codazzi1} and the square of \eqref{cs}, we derive
\begin{equation}
(\alpha^2+\beta^2)(\Lambda_1^2+\Lambda_2^2)=0,
\end{equation}
since $\Lambda_1>0$ on $U$, we conclude that $\alpha=\beta=0$ on $U$, which means that $E_1$ and $E_2$ are both totally geodesic distributions on $U$.
Therefore, applying the theorem of Frobenius, there exist local coordinates $\{s,t\}$ on $U$ such that $E_1=\frac{\partial}{\partial s},~E_2=\frac{\partial}{\partial t}$, and $M^2$ is locally a Riemannian product $I_1\times I_2\in \mathbb{R} \times \mathbb{R}$. Since the second fundamental form satisfies \eqref{frame}, using a lemma of J. D. Moore (see Lemma in the end of section 2 of \cite{Moore}), we know that $x$ is locally a product immersion. Here we present a direct proof of this conclusion. Since $E_1=\frac{\partial}{\partial s}$ and $E_2=\frac{\partial}{\partial t}$ are orthonormal, $x$ is a Lagrangian immersion, we derive
\begin{equation}\label{xst}
\l x_s,x_s\r=\l x_t,x_t\r=1,~\l x_s,x_t\r=\l x_s,i x_t\r=0.
\end{equation}
From \eqref{frame} and  \eqref{conn}, using $\alpha=\beta=0$,
we have
\begin{equation}\label{xsstt}
\begin{aligned}
x_{st}=0,
\end{aligned}
\end{equation}
which implies that there exist four complex functions $f_i(s),g_i(t),i=1,2$ such that
\begin{equation}\label{expressx}
x=(f_1(s)+g_1(t),f_2(s)+g_2(t))\in\mathbb{C}^2.
\end{equation}
\eqref{expressx} combined with \eqref{xst} imply
\begin{equation}\label{fg}
\left\{
\begin{aligned}
&|f_1'(s)|^2+|f_2'(s)|^2=1,~|g_1'(t)|^2+|g_2'(t)|^2=1,\\
&|f_1'(s)|^2+|g_1'(t)|^2=1,~|f_2'(s)|^2+|g_2'(t)|^2=1,\\
&f_1'(s)\bar{g_1}'(t)+f_2'(s)\bar{g_2}'(t)=0,\\
&f_1'(s)\bar{f_2}'(s)+g_1'(t)\bar{g_2}'(t)=0.
\end{aligned}
\right.
\end{equation}
In \eqref{fg}, the  equations  in the first and third lines are direct consequences of \eqref{expressx} combined with \eqref{xst} and  these equations mean that the matrix $
A=\left[
\begin{array}{cc}
f_1'(s)&f_2'(s)\\
g_1'(t)&g_2'(t)
\end{array}
\right]
$
is a unitary matrix, so we obtain the equations in the second and forth lines of \eqref{fg}.

Using \eqref{fg}, there exist two real constants $\theta_0,~\theta_1$ and two real functions $f(s),g(t)$ such that
\begin{equation*}
f_1'(s)=\cos{\theta_0}e^{if(s)},~f_2'(s)=\sin{\theta_0}e^{i\theta_1}e^{if(s)},~g_1'(t)=-\sin{\theta_0}e^{ig(t)},~g_2'(t)=\cos{\theta_0}e^{i\theta_1}e^{ig(t)}.
\end{equation*}
If we denote $F(s)=\int_0^s e^{if(\tilde{s})}d\tilde{s},~G(t)=\int_0^t e^{ig(\tilde{t})}d\tilde{t}$, then we obtain
\begin{equation}
x=(\cos{\theta_0}F(s)-\sin{\theta_0}G(t)+c_1,\sin{\theta_0}e^{i\theta_1}F(s)+\cos{\theta_0}e^{i\theta_1}G(t)+c_2),
\end{equation}
where $c_1$ and $c_2$ are two complex constants. By solving $$\cos{\theta_0}a_1-\sin{\theta_0}a_2=c_1,~\sin{\theta_0}e^{i\theta_1}a_1+\cos{\theta_0}e^{i\theta_1}a_2=c_2,$$
we get a unique solution for $a_1$ and $a_2$, so $x$ can be expressed as
\begin{equation}
x=(\cos{\theta_0}(F(s)+a_1)-\sin{\theta_0}(G(t)+a_2),\sin{\theta_0}e^{i\theta_1}(F(s)+a_1)+\cos{\theta_0}e^{i\theta_1}(G(t)+a_2)),
\end{equation}
where $a_1$ and $a_2$ are two complex constants.
Therefore, up to an isometry of $\mathbb{C}^2$, $x$ is locally congruent with
\begin{equation}
x(s,t)=(x_1(s),x_2(t))=(F(s)+a_1,G(t)+a_2)\in\mathbb{C}^2,
\end{equation}
which is locally a  product immersion from a Riemannian product $I_1\times I_2$ to $\mathbb{C}^2$.
Finally, since $M^2$ is a self-shrinker, from the self-shrinker equation \eqref{1.1}, we obtain that $x_1(s):I_1\to\mathbb{C}$ and $x_2(t):I_2\to\mathbb{C}$ also satisfy the self-shrinker equation \eqref{1.1}, hence we obtain that
$M^2$ is locally an open part of the Riemannian product of two one-dimensional self-shrinkers in $\mathbb{C}=\mathbb{R}^2$, i.e.,
$M^2$ is locally an open part of  the Riemannian product of two Abresch-Langer curves.
\end{proof}

There is a special property of the Abresch-Langer curves (see Theorem A in \cite{AL} and Lemma 5.3 in \cite{Smo05}):
\begin{lem}[see  Lemma 5.3 in \cite{Smo05}]\label{lem5.4}
If $x:\Gamma\to\mathbb{R}^2$ is an Abresch-Langer curve, $k$ is the curvature of $\Gamma$ with respect to its inner unit normal, then there exists a constant $c_{\Gamma}$ such that
$ke^{-|x|^2/2}=c_{\Gamma}$ holds on all of $\Gamma$. If $\Gamma_1,~\Gamma_2$ are two Abresch-Langer  curves with $c_{\Gamma_1}=c_{\Gamma_2}$,
then up to a Euclidean motion $\Gamma_1=\Gamma_2$. Moreover, $k_{\text{min}}$ and $k_{\text{max}}$ satisfy
$k_{\text{min}}e^{-k_{\text{min}}^2/2}=k_{\text{max}}e^{-k_{\text{max}}^2/2}=c_{\Gamma}$, hence $k_{\text{min}}(\Gamma)>0$ if $\Gamma$ is not a  straight line through the origin.
\end{lem}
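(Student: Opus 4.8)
The plan is to establish the three assertions in turn, working throughout in the arclength parametrization of $\Gamma$. Write $T=\dot x$ for the unit tangent and $N$ for the inner unit normal, so that the Frenet equations read $\dot T=kN$ and $\dot N=-kT$. Since $\Gamma$ is a curve its mean curvature vector is $\mathbf H=\ddot x=kN$, and the self-shrinker equation \eqref{1.1}, $\mathbf H=-x^{\bot}=-\langle x,N\rangle N$, becomes the scalar relation $k=-\langle x,N\rangle$. For the first assertion I would simply differentiate: using $\frac{d}{ds}|x|^2=2\langle x,T\rangle$ and, from the Frenet equations together with $k=-\langle x,N\rangle$, the identity $\dot k=k\langle x,T\rangle$, one finds
$$\frac{d}{ds}\Big(ke^{-|x|^2/2}\Big)=\big(\dot k-k\langle x,T\rangle\big)e^{-|x|^2/2}=0 .$$
Hence $ke^{-|x|^2/2}$ is constant along $\Gamma$, and this constant is $c_{\Gamma}$.

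Next I would reduce the geometry to a planar autonomous system. Setting $u=\langle x,T\rangle$, the Frenet equations and $k=-\langle x,N\rangle$ give $\dot k=ku$ and $\dot u=1-k^2$, while $|x|^2=\langle x,T\rangle^2+\langle x,N\rangle^2=u^2+k^2$; thus $c_{\Gamma}=ke^{-(u^2+k^2)/2}$ is a first integral of this system. At any critical point of $k$ we have $0=\dot k=ku$, so either $k=0$ (the straight line through the origin, $c_{\Gamma}=0$) or $u=0$, in which case $|x|^2=k^2$ and therefore $c_{\Gamma}=ke^{-k^2/2}$. Applying this at the two extreme values yields $k_{\min}e^{-k_{\min}^2/2}=k_{\max}e^{-k_{\max}^2/2}=c_{\Gamma}$. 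Since the real function $f(k)=ke^{-k^2/2}$ is positive, strictly increasing on $(0,1)$ and strictly decreasing on $(1,\infty)$ with maximum $f(1)=e^{-1/2}$, whenever $\Gamma$ is not a line through the origin (so $c_{\Gamma}\neq 0$) the two extremal curvatures are the two roots $0<k_{\min}\le 1\le k_{\max}$ of $f(k)=c_{\Gamma}$; in particular $k_{\min}>0$.

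For the rigidity statement I would exploit that $c_{\Gamma}$ is a genuine first integral of $\dot k=ku,\ \dot u=1-k^2$. On the half-plane $\{k>0\}$ the function $F(k,u)=ke^{-(k^2+u^2)/2}$ has a single critical point at $(1,0)$, a nondegenerate maximum with value $e^{-1/2}$, so for each $c\in(0,e^{-1/2})$ the level set $\{F=c,\ k>0\}$ is a single closed orbit; this is precisely the qualitative phase-plane analysis of Abresch and Langer (Theorem A in \cite{AL}). Consequently, if $c_{\Gamma_1}=c_{\Gamma_2}$ the associated solutions $(k,u)(s)$ trace the same orbit in the same direction, and after a translation of the arclength parameter one has $k_1(s)=k_2(s)$ and $u_1(s)=u_2(s)$ for all $s$. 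Finally I would reconstruct the curve from its curvature: the frame system $\dot x=T,\ \dot T=kN,\ \dot N=-kT$ is a linear ODE determined by $k(s)$, and the self-shrinker relations force the pointwise identity $x=uT-kN$, so the base position is pinned down by the base frame. Matching the orthonormal frames $(T,N)$ of $\Gamma_1$ and $\Gamma_2$ at the base point by an orthogonal transformation fixing the origin then carries $\Gamma_1$ onto $\Gamma_2$, proving they coincide up to a Euclidean motion.

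I expect the only real obstacle to be the phase-plane step: verifying that each level set $\{F=c,\ k>0\}$ is a single periodic orbit, equivalently that $k(s)$ is periodic and oscillates monotonically between $k_{\min}$ and $k_{\max}$. This is exactly the qualitative ODE analysis carried out by Abresch and Langer, and everything else reduces to it. The two computational assertions, namely the conservation law and the extremal identity, are short once the Frenet frame and the relation $k=-\langle x,N\rangle$ are in place.
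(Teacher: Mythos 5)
The paper gives no proof of this lemma at all: it is stated as a quotation of Lemma 5.3 in \cite{Smo05} (together with Theorem A of \cite{AL}), so there is no internal argument to compare against. Your proposal is a correct, essentially self-contained derivation of the cited facts, and it follows what is in effect the standard (Abresch--Langer/Smoczyk) route. The computations check: with $k=-\langle x,N\rangle$ and $u=\langle x,T\rangle$ one gets $\dot k=ku$, $\dot u=1-k^2$, hence $\frac{d}{ds}\bigl(ke^{-|x|^2/2}\bigr)=0$; at an extremum of $k$ the constancy of $c_\Gamma\neq 0$ rules out $k=0$, so $u=0$ and $|x|^2=k^2$, giving $k_{\min}e^{-k_{\min}^2/2}=k_{\max}e^{-k_{\max}^2/2}=c_\Gamma$ and, by unimodality of $f(k)=ke^{-k^2/2}$, the bound $0<k_{\min}\le 1\le k_{\max}$; and once the two curvature functions agree after an arclength shift, the fundamental theorem of plane curves yields congruence --- your remark that $x=uT-kN$ pins the position to the frame even shows the Euclidean motion can be taken orthogonal (fixing the origin). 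The only step you defer, namely that each level set $\{F=c,\ k>0\}$ with $0<c<e^{-1/2}$ is a single closed orbit free of equilibria (so $(k,u)$ is periodic, the extrema of $k$ are attained, and two solutions with the same $c$ differ by a parameter shift), is precisely what you attribute to \cite{AL}; that is legitimate here since the lemma is their result, but if you want full self-containment it follows directly by solving $F=c$ for $u$: $u^2=2\bigl(\ln k-\tfrac{k^2}{2}-\ln c\bigr)$, and the right-hand side is strictly concave in $k$ with positive maximum at $k=1$, so the level set is the union of two graphs over $[k_{\min},k_{\max}]$ meeting at the endpoints, i.e.\ one closed curve. One small point worth making explicit: a straight-line self-shrinker necessarily passes through the origin (since $\mathbf{H}=0$ forces $x^{\bot}=0$), which is what justifies identifying the degenerate case $c_\Gamma=0$ with ``straight line through the origin'' in the statement.
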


Applying Lemma \ref{lem5.4}, we know that if two Abresch-Langer curves $\Gamma_1$ and $\Gamma_2$ coincide on an open set, then $\Gamma_1$ and $\Gamma_2$
coincide completely. Consequently, if the Riemannian product of two Abresch-Langer curves ($x:\Gamma_1\times \Gamma_2\to\mathbb{C}^2$) and the Riemannian product of other two Abresch-Langer curves ($\tilde{x}:\tilde{\Gamma_1}\times \tilde{\Gamma_2}\to\mathbb{C}^2$) coincide on an open set, then $\Gamma_1\times \Gamma_2$  and $\tilde{\Gamma_1}\times \tilde{\Gamma_2}$ coincide completely.
Using Lemma \ref{lem5.4} and Proposition \ref{prop5.3}, we conclude that Proposition \ref{prop5.3} is also true
in the global sense.
\begin{prop}\label{prop5.5}
Let $x:M^2\to \mathbb{C}^2$ be a complete connected Lagrangian self-shrinker. If $M^2$ is flat, then $M^2$ is  the Riemannian product of two Abresch-Langer curves.
\end{prop}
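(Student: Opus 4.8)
The plan is to upgrade the local statement of Proposition \ref{prop5.3} to a global one, and the entire difficulty is one of \emph{analytic continuation}: showing that the local product structure, obtained on a small neighborhood of each point, patches consistently into a single global Riemannian product $\Gamma_1\times\Gamma_2$. The key enabling tool is Lemma \ref{lem5.4}, whose rigidity conclusion---two Abresch-Langer curves agreeing on an open arc must agree completely, and an Abresch-Langer curve is determined up to Euclidean motion by its constant $c_\Gamma$---is precisely what forbids the local factors from ``drifting'' as we move the base point.

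First I would invoke Proposition \ref{prop5.3} at an arbitrary point $p\in M^2$ to obtain a neighborhood $U_p$ on which $x$ is congruent to a product $(x_1(s),x_2(t))$ of two Abresch-Langer curves, with orthonormal eigendirections $E_1=\partial_s$, $E_2=\partial_t$ of the second fundamental form as in \eqref{frame}. The flatness and completeness of $M^2$ give, via $\mathcal{L}$-type considerations or directly from the structure equations, that $M^2$ is a quotient of $\R^2$; the orthonormal parallel frame $E_1,E_2$ (recall $\alpha=\beta=0$ in \eqref{conn} on each $U_p$) extends along paths, so the two totally geodesic foliations by $E_1$-- and $E_2$--integral curves are globally defined. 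Each leaf is an Abresch-Langer curve, so I obtain two globally defined one-parameter families of such curves. The substance of the argument is then to show these families are \emph{constant}: as one slides the base point along an $E_2$--leaf, the $E_1$--factor curve $\Gamma_1$ cannot change. This is exactly where Lemma \ref{lem5.4} enters. Two nearby $E_1$--leaves are Abresch-Langer curves sharing the smooth dependence forced by \eqref{codazzi1}--\eqref{cs}; since $\alpha=\beta=0$ forces $\Lambda_1,\Lambda_2$ to be functions of $s$ and $t$ separately, the curvature of each factor, hence its invariant $c_\Gamma$, is locally constant along the transverse direction, and connectedness of $M^2$ promotes ``locally constant'' to ``constant.''

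Concretely I would argue as follows. On each $U_p$ the factor $x_1$ satisfies the one-dimensional self-shrinker equation, so it is an Abresch-Langer curve with a well-defined constant $c_{\Gamma_1}$; by Lemma \ref{lem5.4} this constant is a genuine invariant of the leaf. The map sending a point $q$ to the pair $(c_{\Gamma_1}(q),c_{\Gamma_2}(q))$ of invariants of the two factor leaves through $q$ is continuous (indeed locally constant by the separation of variables just noted) on the connected manifold $M^2$, hence constant. By the uniqueness clause of Lemma \ref{lem5.4}, a fixed value of $c_{\Gamma_i}$ determines the curve $\Gamma_i$ up to Euclidean motion; combined with the overlap-rigidity---if two product representations agree on an open set they agree wherever both are defined---this forces all local product representations to be restrictions of one global product immersion $x:\Gamma_1\times\Gamma_2\to\C^2$. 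Finally, completeness of $M^2$ together with completeness of each Abresch-Langer factor (these curves, being closed or properly immersed self-shrinkers in $\R^2$, are complete) identifies $M^2$ with the full Riemannian product $\Gamma_1\times\Gamma_2$.

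The main obstacle I anticipate is not any single computation but the \emph{gluing/monodromy} bookkeeping: making precise that the locally defined factor curves assemble into globally well-defined foliations with no holonomy obstruction, and that the ``invariant is constant'' argument is genuinely patching local product charts rather than merely comparing curvatures pointwise. Lemma \ref{lem5.4} is tailor-made to kill this obstacle---its rigidity removes any ambiguity in how overlapping charts are identified---so I expect the proof to reduce, after the right setup, to the short invariance-of-$c_\Gamma$ argument sketched above, exactly as the paper's remark ``Using Lemma \ref{lem5.4} and Proposition \ref{prop5.3}, we conclude that Proposition \ref{prop5.3} is also true in the global sense'' suggests.
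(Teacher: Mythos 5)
There is a genuine gap: your proposal never confronts the set of totally geodesic points. The local product chart of Proposition \ref{prop5.3}, with its eigenframe $E_1,E_2$ satisfying \eqref{frame}, is constructed via the implicit function theorem only where $\lambda_1>0$, i.e.\ at points that are \emph{not} totally geodesic (at interior points of the totally geodesic set one instead gets a product of straight lines, with no canonical frame, and at boundary points of that set no chart is produced at all). Consequently, your globally defined foliations, the parallel extension of the frame ``along paths,'' and the invariants $q\mapsto(c_{\Gamma_1}(q),c_{\Gamma_2}(q))$ are only defined on the open set $U_2$ of non--totally geodesic points, and your ``locally constant $+$ connected $\Rightarrow$ constant'' step fails because $U_2$ need not be connected: a priori $M^2$ could consist of several product pieces separated by totally geodesic regions, with different factor curves (hence different invariants) on different pieces, and with the foliation directions rotating arbitrarily across the flat regions. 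Nothing in your argument excludes this configuration; identifying ``gluing/monodromy bookkeeping'' as the obstacle and asserting that overlap rigidity kills it does not address it, because at a boundary point of the totally geodesic set there is no overlap of product charts to which rigidity could be applied.

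The paper's proof is organized precisely around this issue, and it uses Lemma \ref{lem5.4} through a different mechanism than the one you invoke. It decomposes $U_2$ into connected components $U_{2k}$; a chain-of-overlapping-charts argument plus overlap rigidity (this part is essentially your step (d)) shows each $U_{2k}$ is an open part of a single product $\Gamma_{1k}\times\Gamma_{2k}$ with, say, $\Gamma_{1k}$ not a straight line. The key point you are missing is the conclusion $k_{\mathrm{min}}(\Gamma_{1k})>0$ from Lemma \ref{lem5.4} (a non-straight Abresch--Langer curve has curvature bounded below by a positive constant, since $k e^{-k^2/2}=c_\Gamma\neq 0$): this yields the uniform bound $|A|^2\geq k_{\mathrm{min}}^2(\Gamma_{1k})>0$ on $U_{2k}$, hence by continuity on $\overline{U_{2k}}$, so $\overline{U_{2k}}\subset U_2$ and thus $U_{2k}$ is closed as well as open. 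Connectedness of $M^2$ then forces the dichotomy: either $U_2=\emptyset$ and $M^2$ is a product of two straight lines, or $U_2=M^2$ and $M^2$ is (an open, complete, hence full) single product of two Abresch--Langer curves. Once this dichotomy is in hand, your global foliation and constancy-of-$c_\Gamma$ apparatus becomes unnecessary; without it, your argument does not close.
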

\begin{proof}
We use the same notations as in Proposition \ref{prop5.3}.

 We define $U_1=\{p\in M^2~|p ~~\text{is a totally geodesic point}\}$, $U_2=M^2-U_1$.
It is obvious that $U_1$ is a closed set and $U_2$ is an open set.
We prove that either $M^2=U_1$ or $M^2=U_2$. As $U_2$ is an open set, we immediately get that
$U_2=\cup_k U_{2k}$, where $U_{2k}$ are open disjoint connected components of $U_2$.
For any $k$, $\forall~ p\in U_{2k}$, by using Proposition \ref{prop5.3}, we know that there exists a neighborhood $U_p\subset U_{2k}$ such that  $U_p$ is an open part of the Riemannian product of
two Abresch-Langer curves. We denote $U_{2k}=\cup_{p\in U_{2k}}U_p$.  If $p_1,p_2\in U_{2k},~U_{p_1}\cap U_{p_2}\neq \emptyset$,  $U_{p_1}$ is an open part of $\Gamma_1\times \Gamma_2$, $U_{p_2}$ is an open part of $\tilde{\Gamma_1}\times \tilde{\Gamma_2}$, then $\Gamma_1\times \Gamma_2$ and $\tilde{\Gamma_1}\times \tilde{\Gamma_2}$ coincide on the nonempty open set $U_{p_1}\cap U_{p_2}$, so we obtain that $\Gamma_1\times \Gamma_2$  and $\tilde{\Gamma_1}\times \tilde{\Gamma_2}$ coincide completely. This implies that each component $U_{2k}$ is an open part of two Abresch-Langer curves $\Gamma_{1k}\times \Gamma_{2k}$. By definition of $U_2$, $\forall~ p\in U_{2k}$, $p$ is not a totally geodesic point, without loss of generality, we assume that $\Gamma_{1k}$ is not a straight line, then we get that $k(\Gamma_{1k})\geq k_{\text{min}}(\Gamma_{1k})>0$, then by use of continuity, $\forall ~\tilde{p} \in \overline{U_{2k}}$, $|A|^2(\tilde{p} )\geq k^2_{\text{min}}(\Gamma_{1k})>0$, so we deduce that $\tilde{p} $ is not a totally geodesic point, which means that $\overline{U_{2k}}\subset U_2$. On the other hand, $U_{2k}$ is a connected component of $U_2$,
so we get $\overline{U_{2k}}=U_{2k}$, hence $U_{2k}$ is open and closed, which implies that either $U_{2k}=\emptyset$ or $U_{2k}=M^2$. Therefore, there are two possibilities: (i) $U_{2k}=\emptyset,~\forall~ k$. In this case, $U_2=\emptyset$ and $M^2=U_1$; (ii) $\exists ~k $ s.t. $U_{2k}=M^2$. In this case $U_1=\emptyset$ and $M^2=U_2$.

If $M^2=U_1$, then $M^2$ is totally geodesic. As $M^2$ is complete and connected,  we obtain that $M^2$ is the Riemannian product of two straight lines (straight line is also a special Abresch-Langer curve).

If $M^2=U_2$, since $M^2$ is complete and connected, then from the arguments above,  we get that $M^2$ is  the Riemannian product of two Abresch-Langer curves.
\end{proof}

\noindent \textbf{Proof of Theorem \ref{thm1.3}~:}
Under the assumptions of Theorem \ref{thm1.3}, from Lemma \ref{lem5.1}, we know that $K=0$ and $M^2$ is a topological torus. Then applying Proposition \ref{prop5.5}, we obtain that $M^2$ is  the Riemannian product of two Abresch-Langer curves.
\qed

Combing Proposition \ref{prop5.2} and Proposition \ref{prop5.5}, we obtain
\begin{prop}\label{prop5.6}
Let $x:M^2\to \mathbb{C}^2$ be a compact orientable Lagrangian self-shrinker. If the Gauss curvature $K$ of $M^2$ is constant, then $K=0$ and $M^2$ is the Riemannian product of two closed Abresch-Langer curves.
\end{prop}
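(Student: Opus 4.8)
The plan is to obtain this statement essentially for free by chaining together the two preceding results, Proposition \ref{prop5.2} and Proposition \ref{prop5.5}, whose hypotheses dovetail with almost no gap. First I would invoke Proposition \ref{prop5.2}: under the standing hypotheses that $M^2$ is a compact orientable Lagrangian self-shrinker with constant Gauss curvature $K$, that proposition already forces $K=0$ and tells us $M^2$ is a topological torus. This simultaneously pins down the value of $K$ and establishes that $M^2$ is flat, with no additional curvature computation required on our part.

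With flatness in hand, the second step is to feed $M^2$ into Proposition \ref{prop5.5}. Because $M^2$ is compact it is in particular complete, and it is connected by the standing assumption recorded at the end of the introduction; hence the hypotheses of Proposition \ref{prop5.5} are met. That proposition then yields a genuine global splitting of $M^2$ as a Riemannian product $\Gamma_1\times\Gamma_2$ of two Abresch-Langer curves, realized by a product immersion into $\mathbb{C}^2$. It is important here that Proposition \ref{prop5.5} provides an honest global product decomposition rather than the merely local structure coming from Proposition \ref{prop5.3}; the completeness and connectedness inputs are exactly what promote the local statement to a global one.

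The only remaining point — and the one place where compactness must be used a second time — is to upgrade the two factors from arbitrary Abresch-Langer curves to \emph{closed} ones. Since the splitting furnished by Proposition \ref{prop5.5} is a true Riemannian product, compactness of $M^2=\Gamma_1\times\Gamma_2$ forces each factor $\Gamma_i$ to be compact (the projections are continuous and surjective, so each factor is a continuous image of a compact space). A compact one-dimensional manifold is a circle, so each $\Gamma_i$ is a closed Abresch-Langer curve; in particular neither factor is a straight line, consistent with $M^2$ being a torus. This identifies $M^2$ as the Riemannian product of two closed Abresch-Langer curves and completes the argument.

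I expect the main difficulty to be bookkeeping rather than mathematics: one must verify that the completeness and connectedness requirements of Proposition \ref{prop5.5} are genuinely in force so that the global — not merely local — product structure is available, and then check that the compactness argument closing up the factors is clean. No new estimates, Ricci identities, or $\mathcal{L}$-computations are needed beyond what the two cited propositions already supply.
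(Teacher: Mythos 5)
Your proposal is correct and is exactly the paper's argument: the paper proves Proposition \ref{prop5.6} by the one-line observation that it follows from combining Proposition \ref{prop5.2} (constant $K$ forces $K=0$, hence flatness) with Proposition \ref{prop5.5} (complete connected flat Lagrangian self-shrinkers split globally as products of Abresch-Langer curves). Your additional bookkeeping --- compactness gives completeness, the standing connectedness assumption applies, and compactness of $M^2$ forces each factor to be a closed curve --- is precisely the routine verification the paper leaves implicit.
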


If $x:M^2\to \mathbb{C}^2$ is embedded, using the result of Abresch-Langer which states that
the only closed embedded self-shrinker in $\mathbb{R}^2$ is the circle, as immediate consequences of Proposition \ref{prop5.6}, we obtain

\begin{cor}\label{newcor5.6}
 The Clifford torus $\mathbb{S}^1(1)\times\mathbb{S}^1(1)$  is the unique compact orientable embedded  Lagrangian self-shrinker in $\mathbb{C}^2$ with constant Gauss curvature.
\end{cor}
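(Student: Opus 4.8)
The plan is to reduce the embedded case to Proposition \ref{prop5.6} and then exploit the rigidity of closed embedded planar self-shrinkers. First I would invoke Proposition \ref{prop5.6}: since $M^2$ is a compact orientable Lagrangian self-shrinker with constant Gauss curvature, we obtain $K=0$ and a splitting $M^2=\Gamma_1\times\Gamma_2$ as a Riemannian product of two closed Abresch-Langer curves. As already recorded in the proof of Proposition \ref{prop5.3}, the product structure together with \eqref{1.1} forces each factor $\Gamma_i$ to be a one-dimensional self-shrinker in $\mathbb{C}=\mathbb{R}^2$, i.e. a closed Abresch-Langer curve.

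Next I would upgrade embeddedness of $x$ to embeddedness of each factor. If some factor, say $\Gamma_1$, failed to be embedded, there would exist distinct parameter values with the same image point in $\mathbb{R}^2$; fixing any point of $\Gamma_2$ and taking the corresponding two points of the product $M^2=\Gamma_1\times\Gamma_2$ would then yield distinct points of $M^2$ with identical image under $x$, contradicting that $x$ is an embedding. Hence both $\Gamma_1$ and $\Gamma_2$ are closed embedded self-shrinkers in $\mathbb{R}^2$. By the classification of Abresch and Langer, the only such curve is the circle, so $\Gamma_i=\mathbb{S}^1(r_i)$ for some radii $r_i>0$.

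Finally I would pin down the radii. For a circle $\mathbb{S}^1(r)$ centered at the origin the position vector is purely normal, and a direct computation gives $\mathbf{H}=-x/r^2$; comparing with the self-shrinker equation $\mathbf{H}=-x^{\bot}=-x$ from \eqref{1.1} forces $r=1$. Thus each factor is the unit circle and $M^2=\mathbb{S}^1(1)\times\mathbb{S}^1(1)$, the Clifford torus. The converse is routine, since the Clifford torus is a compact orientable embedded Lagrangian self-shrinker with vanishing (hence constant) Gauss curvature. The only point requiring a little care, rather than a genuine obstacle, is verifying that each closed planar factor inherits both the self-shrinker equation and embeddedness from the product; once this is granted, the Abresch-Langer rigidity and the radius computation finish the argument at once.
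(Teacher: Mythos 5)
Your proof is correct and takes essentially the same route as the paper: Proposition \ref{prop5.6} gives $K=0$ and the splitting into a product of two closed Abresch--Langer curves, embeddedness of $x$ forces each factor to be embedded, and the Abresch--Langer rigidity result (the only closed embedded self-shrinker in $\mathbb{R}^2$ is the circle, necessarily of radius $1$) identifies $M^2$ as $\mathbb{S}^1(1)\times\mathbb{S}^1(1)$. The paper records this as an immediate consequence of Proposition \ref{prop5.6}; your write-up simply makes explicit the routine details (injectivity of each factor and the radius computation) that the paper leaves implicit.
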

\begin{rem}
By a theorem of Whitney, any compact (without boundary) orientable embedded Lagrangian surface $M^2$ in $\mathbb{C}^2$ has to be a topological torus. If we assume that $M^2$ is a compact orientable embedded Lagrangian self-shrinker in $\mathbb{C}^2$ with nonpositive Gauss curvature, then by using Gauss-Bonnet theorem we obtain $K=0$.
Using Proposition \ref{prop5.5} and the result of Abresch-Langer which states that
the only closed embedded self-shrinker in $\mathbb{R}^2$ is the circle, we deduce the following characterization of the Clifford torus.
\end{rem}
\begin{cor} \label{cor5.10}
 The Clifford torus $\mathbb{S}^1(1)\times\mathbb{S}^1(1)$  is the unique compact orientable embedded  Lagrangian self-shrinker in $\mathbb{C}^2$ with nonpositive Gauss curvature.
\end{cor}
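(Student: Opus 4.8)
The plan is to reduce the statement, via the topology of compact embedded Lagrangian surfaces together with Gauss--Bonnet, to the flat case already settled in Proposition \ref{prop5.5}, and then to apply the Abresch--Langer rigidity for embedded planar self-shrinkers to each factor. First I would invoke Whitney's theorem, recalled in the preceding remark, which guarantees that a compact orientable embedded Lagrangian surface in $\mathbb{C}^2$ is a topological torus, so that $\text{gen}(M^2)=1$. Applying the Gauss--Bonnet theorem as in Lemma \ref{lem5.1} then yields
\begin{equation*}
\int_{M^2} K\, dv = 4\pi\bigl(1-\text{gen}(M^2)\bigr)=0.
\end{equation*}
Since the hypothesis supplies $K\leq 0$ pointwise while the total integral vanishes, I conclude $K\equiv 0$; that is, $M^2$ is flat.

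Next, because $M^2$ is compact it is in particular complete, and by the standing assumption it is connected, so Proposition \ref{prop5.5} applies directly and realizes $M^2$ globally as the Riemannian product of two Abresch--Langer curves $\Gamma_1\times\Gamma_2$, via a product immersion of the form $x(s,t)=(x_1(s),x_2(t))\in\mathbb{C}\times\mathbb{C}$. Compactness of $M^2$ forces each factor $\Gamma_i$ to be a closed curve, so each $x_i$ is a closed Abresch--Langer self-shrinker in $\mathbb{R}^2=\mathbb{C}$.

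The one point that genuinely requires an argument is that embeddedness of the product descends to each planar factor. I would argue by contradiction: if $x_1(s_1)=x_1(s_2)$ for some $s_1\neq s_2$, then $x(s_1,t)=x(s_2,t)$ for every $t$, contradicting the embeddedness of $M^2$; the same reasoning applies to $\Gamma_2$. Hence each $\Gamma_i$ is an embedded closed self-shrinker in $\mathbb{R}^2$. By the Abresch--Langer classification, the only such curve is a round circle, and substituting a circle of radius $r$ centered at the origin into the self-shrinker equation $\mathbf{H}=-x^{\bot}$ gives $-x/r^2=-x$, hence $r=1$. Therefore $\Gamma_1=\Gamma_2=\mathbb{S}^1(1)$ and $M^2=\mathbb{S}^1(1)\times\mathbb{S}^1(1)$ is the Clifford torus. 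I expect the main obstacle to be only this descent of embeddedness and closedness to the factors, since all of the analytic content is already carried by Proposition \ref{prop5.5} and by the cited Abresch--Langer rigidity.
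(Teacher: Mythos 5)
Your proposal is correct and follows essentially the same route as the paper, which proves this corollary in the remark preceding it: Whitney's theorem gives the torus topology, Gauss--Bonnet with $K\leq 0$ forces $K\equiv 0$, Proposition \ref{prop5.5} gives the product of Abresch--Langer curves, and the Abresch--Langer classification of closed embedded planar self-shrinkers yields the circles. Your write-up merely makes explicit two small points the paper leaves implicit (that embeddedness of the product immersion descends to each factor, and that the circle must have radius $1$), both of which are argued correctly.
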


\bigskip
\noindent {\bf Acknowledgements}: The authors would like to thank the referee  for the very careful review and for providing a number of  valuable comments and suggestions.

\end{document}